\newcommand{\tr}{{\rm tr}}
\newcommand{\as}{{\rm as}}
\newcommand{\bdev}{{\bf dev}}
\renewcommand{\div}{{\rm div}}
\newcommand{\R}{\mathds{R}}
\newcommand{\bpsi}{\mbox{\boldmath$\psi$}}
\newcommand{\bXi}{\mbox{\boldmath$\Xi$}}
\newcommand{\bepsilon}{\mbox{\boldmath$\varepsilon$}}
\newcommand{\btau}{\mbox{\boldmath$\tau$}}
\newcommand{\bchi}{\mbox{\boldmath$\chi$}}
\newcommand{\bxi}{\mbox{\boldmath$\xi$}}
\newcommand{\brho}{\mbox{\boldmath$\rho$}}
\newcommand{\bsigma}{\mbox{\boldmath$\sigma$}}
\newcommand{\bSigma}{\mbox{\boldmath$\Sigma$}}
\newcommand{\bnabla}{\mbox{\boldmath$\nabla$}}
\newcommand{\bzero}{{\bf 0}}
\newcommand{\bff}{{\bf f}}
\newcommand{\bv}{{\bf v}}
\newcommand{\bz}{{\bf z}}
\newcommand{\bb}{{\bf b}}
\newcommand{\bu}{{\bf u}}
\newcommand{\bg}{{\bf g}}
\newcommand{\bq}{{\bf q}}
\newcommand{\bB}{{\bf B}}
\newcommand{\bI}{{\bf I}}
\newcommand{\bJ}{{\bf J}}
\newcommand{\bV}{{\bf V}}
\newcommand{\bn}{{\bf n}}
\newcommand{\bt}{{\bf t}}
\newcommand{\cA}{{\cal A}}
\newcommand{\cE}{{\cal E}}
\newcommand{\cP}{{\cal P}}
\newcommand{\cT}{{\cal T}}
\newcommand{\cV}{{\cal V}}
\newcommand{\colb}{\color{blue}}
\begin{document}

\title{A posteriori error estimation for planar linear elasticity by stress reconstruction}

\author{Fleurianne Bertrand%
\thanks{Fakult\"at f\"ur Mathematik, Universit\"at Duisburg-Essen, Thea-Leymann-Str. 9,
        45127 Essen, Germany
        (\email{fleurianne.bertrand@uni-due.de}, \email{marcel.moldenhauer@uni-due.de}, \email{gerhard.starke@uni-due.de}).
        The authors gratefully acknowledge support by the Deutsche Forschungsgemeinschaft in the Priority Program SPP 1748
        `Reliable simulation techniques in solid mechanics. Development of nonstandard discretization methods, mechanical and
        mathematical analysis' under the project STA 402/12-1.        
        }
        \and Marcel Moldenhauer\footnotemark[1]
        \and Gerhard Starke\footnotemark[1]
        }
        
\date{}
\maketitle

\begin{abstract}
  The nonconforming triangular piecewise quadratic finite element space by Fortin and Soulie can be used for the displacement
  approximation and its combination with discontinuous piecewise linear pressure elements is known to constitute a stable
  combination for incompressible linear elasticity computations. In this contribution, we extend the stress reconstruction
  procedure and resulting guaranteed a posteriori error estimator developed by Ainsworth, Allendes, Barrenechea and Rankin
  \cite{AinAllBarRan:12} and by Kim \cite{Kim:12a} to linear elasticity. In order to get a guaranteed reliability bound with respect
  to the energy norm involving only known constants, two modifications are carried out: (i) the stress reconstruction in
  next-to-lowest order Raviart-Thomas spaces is modified in such way that its anti-symmetric part vanishes in average on each
  element; (ii) the auxiliary conforming approximation is constructed under the constraint that its divergence coincides with the
  one for the nonconforming approximation. An important aspect of our construction is that all results hold uniformly in the
  incompressible limit. {\colb Global} efficiency is also shown and the effectiveness is illustrated by adaptive computations
  involving different Lam\'e parameters including the incompressible limit case.
\end{abstract}

\begin{keywords}
  a posteriori error estimation, linear elasticity, P2 nonconforming finite elements, stress recovery,
  Fortin-Soulie elements, Raviart-Thomas elements
\end{keywords}

\begin{AMS} 65N30, 65N50 \end{AMS}

\headers{A posteriori error estimation by stress reconstruction}
              {F. Bertrand, M. Moldenhauer, and G. Starke}

\section{Introduction}

\label{sec-introduction}

This paper is concerned with the nonconforming triangular finite element space of piecewise quadratic functions applied to linear
elasticity in two space dimensions. This finite element space possesses some peculiarities due to the existence of a non-zero
quadratic polynomial which vanishes at both Gauss points on all three boundary edges. A suitable basis for this space was
constructed by Fortin and Soulie in \cite{ForSou:83} and the corresponding generalization to three dimensions by Fortin in
\cite{For:85}. The special structure of this nonconforming space leads to some advantageous properties of the piecewise
gradients which have been exploited for the purpose of flux or stress reconstruction by Ainsworth and co-workers in
\cite{AinRan:08a} and \cite{AinAllBarRan:12} and by Kim in \cite{Kim:12a}. Roughly speaking, their reconstruction algorithm
based on the quadratic nonconforming finite element space remains more local and requires less computational work than
similar approaches for more general finite element spaces.

Our contribution with this work consists in the modification of this approach to the stress reconstruction associated with
incompressible linear elasticity and the corresponding guaranteed a posteriori error estimation. The applicability of the
procedure in \cite{AinAllBarRan:12} and \cite{Kim:12a} is limited to bilinear forms involving the full gradient including the
Stokes system which is equivalent to incompressible linear elasticity if Dirichlet conditions are prescribed on the entire boundary.
For incompressible linear elasticity with traction forces prescribed on some part of the boundary, the symmetric gradient needs
to be used instead and this leads to complications associated with the anti-symmetric part of the stress reconstruction. In order
to keep the constants associated with the anti-symmetric stress part under control, a modification like the one presented in this
work needs to be done. Of course, one could perform the stress reconstruction in a symmetric $H (\div)$-conforming stress
space as it is done in \cite{NicWitWoh:08} and \cite{AinRan:10} based on the Arnold-Winther elements \cite{ArnWin:02}.
But this complicates the
stress reconstruction procedure significantly compared to the Raviart-Thomas elements of next-to-lowest order used here.
This is particularly true in three dimensions where the symmetric $H (\div)$-conforming finite element space from
\cite{ArnAwaWin:08} involves polynomials of degree 4 with 162 degrees of freedom per tetrahedron.

Although we restrict our investigation to two space dimensions in this paper, the treatment of three-dimensional elasticity
problems is our ultimate goal. The ingredients of our approach can be generalized to the three-dimensional case on
tetrahedral elements, in some aspects in a straightforward way, in other aspects with complications. We are
convinced that this is one of the most promising routes towards an effective guaranteed a posteriori error estimator for
three-dimensional incompressible linear elasticity. We will therefore remark on generalizations to three dimensions at the end
of our paper and also point out where the generalization is not so straightforward.

With respect to a triangulation $\cT_h$ with the corresponding set of edges denoted by $\cE_h$, the nonconforming finite
element space of degree 2 is defined by
\begin{equation}
  \begin{split}
    V_h & = \{ v_h \in L^2 (\Omega) : \left. v_h \right|_T \in P_2 (T) \mbox{ for all } T \in \cT_h \: , \\
    & \hspace{2cm}
    \langle \llbracket v_h \rrbracket_E , z \rangle_{L^2 (E)} = 0 \mbox{ for all } z \in P_1 (E) \: , \: E \in \cE_h \cap \Omega \: , \\
    & \hspace{2cm}
    \langle v_h , z \rangle_{L^2 (E)} = 0 \mbox{ for all } z \in P_1 (E) \: , \: E \in \cE_h \cap \Gamma_D \} \: ,
  \end{split}
  \label{eq:nonconforming_definition}
\end{equation}
where $\llbracket \: \cdot \: \rrbracket_E$ denotes the jump across the side $E$. In contrast to the lowest-order case (the
nonconforming Crouzeix-Raviart elements), the implementation is not straightforward due to the existence of non-trivial
finite element functions whose support is restricted to one element. On the other hand, the quadratic nonconforming finite
element space has the remarkable property that the continuity equation is satisfied in average on each element and that
the jump of the associated directional derivative in normal direction is zero in average across sides. These properties and
the construction of a suitable basis were already contained in the landmark paper by Fortin and Soulie \cite{ForSou:83}
(and generalized to the three-dimensional case in Fortin \cite{For:85}).

One of the strengths of the quadratic nonconforming finite element space is that it provides a stable mixed method for the
approximation of Stokes or incompressible linear elasticity if it is combined with the space of discontinuous piecewise linear
functions for the pressure. Due to the fact that it satisfies a discrete Korn's inequality also in the presence of traction boundary
conditions, it is among the popular mixed finite element approaches in common use (cf.
\cite[Sects. 8.6.2 and 8.7.2]{BofBreFor:13}). It received increased attention in recent years in the context of a posteriori error
estimation based on reconstructed fluxes or stresses (cf. \cite{AinRan:08a}, \cite{AinAllBarRan:12} and \cite{Kim:12a}). The
special properties associated with average element-wise and side-wise conservation of mass or momentum mentioned above
also lead to simplified flux or stress reconstruction algorithms.
As will be explained in detail in Section \ref{sec-first_estimator}, the direct application of the approach from
\cite{AinAllBarRan:12} and \cite{Kim:12a} to the displacement-pressure formulation of incompressible linear elasticity leaves two
global constants in the reliability bound which remain dependent on the geometry of the domain. In order to overcome this
dependence two modifications of the error estimator are performed. Firstly, we construct a correction to the stress reconstruction
such that the element-wise average of its anti-symmetric part vanishes. This enables us to multiply the anti-symmetry term in
the estimator by a constant associated with an element-wise Korn's inequality which can be computed from the element shape.
Such a shape-dependent Korn's inequality was used by Kim \cite{Kim:11b} in association with a posteriori error estimation for
a stress-based mixed finite element approach. Secondly, the auxiliary conforming approximation is
computed under the additional constraint that its divergence coincides with the nonconforming one. Both steps can be carried
out in a local fashion using a vertex-patch based partition of unity. This procedure ensures that the properties of the resulting
guaranteed a posteriori error estimator remain uniformly valid in the incompressible limit.

A posteriori error estimation based on stress reconstruction has a long history with ideas dating back at least as far as
\cite{LadLeg:83} and
\cite{PraSyn:47}. Recently, a unified framework for a posteriori error estimation based on stress reconstruction for the Stokes
system was carried out in \cite{HanSteVoh:12} (see also \cite{ErnVoh:15} for polynomial-degree robust estimates). These
two references include the treatment of nonconforming methods and both of them contain a historical perspective with a long list
of relevant references.



The outline of this paper is as follows. In the next section we review the displacement-pressure formulation for planar linear
elasticity, its approximation using quadratic nonconforming finite elements and the associated stress reconstruction
procedure in Raviart-Thomas spaces. Based on this, a preliminary version of our a posteriori error estimator is derived in
Section \ref{sec-first_estimator}. Section \ref{sec-guaranteed_estimator} provides an improved and guaranteed a posteriori
estimator where all the constants in the reliability bound are known and depend only on the shape-regularity of the
triangulation. To this end, reconstructed stresses with element-wise average symmetry are required. A procedure for this
construction is described in Section \ref{sec-average_symmetry}. Section \ref{sec-divergence_conforming} presents an
approach to the construction of a conforming approximation with divergence constraint which is also needed for the error
estimator of Section \ref{sec-guaranteed_estimator}. {\colb Global} efficiency is established in Section \ref{sec-efficiency}. Finally,
Section \ref{sec-computational} presents the computational results for the well-known Cook's membrane problem for
different Lam\'e parameters including the incompressible limit.

\section{Displacement-pressure formulation for incompressible linear elasticity and weakly symmetric stress reconstruction}

\label{sec-linear_elasticity_stress}

On a bounded domain $\Omega \subset \R^2$, assumed to be polygonally bounded such that the union of elements in the
triangulation $\cT_h$ coincides with $\Omega$, the boundary is split into $\Gamma_D$ (of positive length) and
$\Gamma_N = \partial \Omega \backslash \Gamma_D$. The boundary value problem of (possibly) incompressible linear
elasticity
consists in the saddle-point problem of finding $\bu \in H_{\Gamma_D}^1 (\Omega)^2$ and $p \in L^2 (\Omega)$ such that
\begin{equation}
  \begin{split}
    2 \mu \: ( \bepsilon (\bu) , \bepsilon (\bv) ) + ( p , \div \: \bv ) 
    & = ( \bff , \bv ) + \langle \bg , \bv \rangle_{L^2 (\Gamma_N)} \: ,\\
    ( \div \: \bu , q ) - \frac{1}{\lambda} ( p , q ) & = 0
  \end{split}
  \label{eq:disp_pressure}
\end{equation}
holds for all $\bv \in H_{\Gamma_D}^1 (\Omega)^2$ and $q \in L^2 (\Omega)$. $\bff \in L^2 (\Omega)^2$ and
$\bg \in L^2 (\Gamma_N)^2$ are prescribed volume and surface traction forces, respectively.
$\mu$ and $\lambda$ are the Lam\'e parameters characterizing the material properties, where
{\colb we may assume $\mu$ to be on the order of one and}
our particular interest lies in
large values of $\lambda$ associated with near-incompressibility. In the limit $\lambda \rightarrow \infty$,
(\ref{eq:disp_pressure}) turns to the Stokes system modelling incompressible fluid flow. For $\Gamma_D = \partial \Omega$,
the Stokes system may then be restated with the full gradients $\bnabla \bu$ and $\bnabla \bv$ and the approaches from
\cite{AinAllBarRan:12}, \cite{Kim:12a} may be applied directly component-wise. In general, however, the symmetric part of the
gradient appearing in (\ref{eq:disp_pressure}) cannot be avoided which complicates the derivation of an a posteriori error
estimator as we shall see below.

The resulting finite-dimensional saddle-point
problem is then to find $\bu_h \in \bV_h$ and $p_h \in Q_h$ such that
\begin{equation}
  \begin{split}
    2 \mu ( \bepsilon (\bu_h) , \bepsilon (\bv_h) )_h + ( p_h , \div \: \bv_h )_h &
    = ( \cP_h \bff , \bv_h ) + \langle \cP_h^{\Gamma_N} \bg , \bv_h \rangle_{\Gamma_N} \\
    ( \div \: \bu_h , q_h )_h - \frac{1}{\lambda} ( p_h , q_h ) & = 0
  \end{split}
  \label{eq:disp_pressure_nonconforming}
\end{equation}
holds for all $\bv_h \in \bV_h$ and $q_h \in Q_h$. The $L^2 (\Omega)$-orthogonal projection $\cP_h$ is meant
component-wise and $\cP_h^{\Gamma_N}$ stands for the (component-wise) $L^2 (\Gamma_N)$-orthogonal projection onto
piecewise linear functions without continuity restrictions. The notation $( \: \cdot \: , \: \cdot \: )_h$ stands for the piecewise
$L^2 (T)$ inner product, summed over all elements $T \in \cT_h$.
The finite element combination of using nonconforming piecewise
quadratic elements for (each component of) the displacements in $\bV_h$ with piecewise linear discontinuous elements
for the pressure in $Q_h$ is particularly attractive. Similarly to the Taylor-Hood pair of using conforming quadratic
finite elements for $\bV_h$ combined with continuous linear finite elements for $Q_h$, it achieves the same optimal
approximation order for both variables. This leads to the same quality of the stress approximation
$\bsigma_h (\bu_h,p_h) = 2 \mu \bepsilon (\bu_h) + p_h \: \bI$ with respect to the $L^2 (\Omega)$ norm. At the expense of an
increased number of degrees of freedom compared to the Taylor-Hood elements (about 1.6 times in 2D, almost 4 times in 3D)
on the same triangulation, the nonconforming approach offers advantages for the stress reconstruction which justifies its use.

For the actual computation of the nonconforming finite element approximation $u_h$, a basis of the space $\bV_h$ is required.
Such a basis, consisting of functions with local support, was derived in \cite{ForSou:83} and \cite{For:85} for the two- and
three-dimensional situation, respectively. The construction uses the fact that $\bV_h = \bV_h^C + \bB_h^{NC}$ with the
conforming subspace $\bV_h^C$ of continuous piecewise quadratic functions and a nonconforming bubble space $\bB_h^{NC}$.
In the two-dimensional case, the nonconforming bubble space is given by
\begin{equation}
  \begin{split}
    \bB_h^{NC} & = \{ \bb_h^{NC} \in L^2 (\Omega)^2 : \left. \bb_h^{NC} \right|_T \in P_2 (T)^2 \mbox{ for all } T \in \cT_h \: , \\
                        & \hspace{2cm} \langle \bb_h^{NC} , \bz \rangle_{L^2 (E)} = 0 \mbox{ for all } \bz \in P_1 (E)^2 \: , \: E \in \cE_h \}
                        \: ,
  \end{split}
\end{equation}
i.e., there is exactly one nonconforming bubble function in $\bB_h^{NC}$ for each displacement component per triangle. We
denote the corresponding space restricted to an element $T \in \cT_h$ by $\bB_h^{NC} (T)$. It should be
kept in mind that the representation $\bV_h = \bV_h^{C} + \bB_h^{NC}$ is not a direct sum, in general. For example, globally
constant functions can be expressed in two different ways in these subspaces, in general. In any case, if $\Gamma_D$ is a
connected subset of positive length of $\partial \Omega$, a basis of $\bV_h$ consisting of nonconforming
bubble functions in $\bB_h^{NC}$ and conforming nodal basis functions in $\bV_h^C$ can be selected.

The well-posedness of the discrete linear elasticity system (\ref{eq:disp_pressure_nonconforming}) relies on the discrete
Korn's inequality
\begin{equation}
  \| \bnabla \bv \|_h \leq C_K \| \bepsilon (\bv) \|_h \mbox{ for all } \bv \in H_{\Gamma_D}^1 (\Omega)^2 + \bV_h \: ,
  \label{eq:discrete_Korn}
\end{equation}
which is satisfied with some constant $C_K$
by the nonconforming quadratic elements (under our assumption that $\Gamma_D$ is a subset of the
boundary with positive length) due to \cite[Thm. 3.1]{Bre:03b}. It is well-known that the linear nonconforming elements by
Crouzeix-Raviart do not satisfy the discrete Korn's inequality, in general, if $\Gamma_N \neq \emptyset$. A second
ingredient is the discrete inf-sup stability which can already be found in the original paper \cite{ForSou:83} (cf. \cite{For:85} for
the three-dimensional case). The a posteriori error estimator will be based on an approximation to the stress tensor
$\bsigma = 2 \mu \bepsilon (\bu) + p \bI$ in conforming and nonconforming subspaces of $H (\div,\Omega)^2$.

From the discontinuous piecewise linear approximation $\bsigma_h = 2 \mu \bepsilon (\bu_h) + p_h \bI$ obtained from the
solution $( \bu_h , p_h ) \in \bV_h \times Q_h$ of the discrete saddle point problem (\ref{eq:disp_pressure_nonconforming}),
we first reconstruct an $H (\div)$-conforming nonsymmetric stress tensor $\bsigma_h^R$ in the Raviart-Thomas space
(componentwise) $\bSigma_h^R$ of next-to-lowest
order usually denoted by $RT_1$ (see, e.g., \cite[Sect. 2.3.1]{BofBreFor:13}). We will work with the corresponding subspace
$\bSigma_h^R \subset H_{\Gamma_N} (\div,\Omega)^2$, where the normal flux is set to zero on
$\Gamma_N = \partial \Omega \backslash \Gamma_D$.
From now on, we
also assume that the families of triangulations $\cT_h$ are shape-regular to make sure that the constants appearing in our
estimates remain independent of $h$. The diameter of an element $T \in \cT_h$ is then denoted by $h_T$.

We use a stress reconstruction $\bsigma_h^R$ in the next-to-lowest order Raviart-Thomas space $\bSigma_h^R$ which
is constructed by the following procedure which is equivalent to the algorithms in \cite{AinAllBarRan:12} and \cite{Kim:12a}. On
each edge $E \in \cE_h$ we set
\begin{equation}
  \bsigma_h^R \cdot \bn = \{ \! \{ \bsigma_h \cdot \bn \} \! \}_E
  {\colb = \left\{ \begin{array}{lcl}
  \cP_h^{\Gamma_N} \bg & , & E \subset \Gamma_N \: , \\
  \left. \bsigma_h \cdot \bn \right|_{T-} & , & E \subset \Gamma_D \: , \\ 
  \left( \left. \bsigma_h \cdot \bn \right|_{T-} + \left. \bsigma_h \cdot \bn \right|_{T+} \right)/2 & , & \mbox{ otherwise} \: ,
  \end{array} \right.}
  \label{eq:surface_degrees}
\end{equation}
{\colb where $T-$ and $T+$ denote the left and right adjacent triangles to $E$, respectively.}
{\colb The continuity of $\langle \bsigma_h \cdot \bn_E , 1 \rangle_E$ and the identity
$( \div \: \bsigma_h + \bff , 1 )_{L^2 (T)} = 0$ hold for Fortin-Soulie elements, see
\cite[Thm. 1]{ForSou:83}.}
The remaining four degrees of freedom per element are chosen such that, on each element $T \in \cT_h$,
\begin{equation}
  ( \div \: \bsigma_h^R + \cP_h \bff , \widehat{\bq}_h )_{L^2 (T)} = 0
  \label{eq:interior_degrees}
\end{equation}
holds for all $\widehat{\bq}_h$ polynomial of degree 1 satisfying $( \widehat{\bq}_h , 1 )_{L^2 (T)} = 0$. The {\colb same line
of proof as in \cite[Thm. 1]{ForSou:83} for the} conservation
properties fulfilled by the quadratic nonconforming finite elements {\colb leads to }
\begin{equation}
  ( \div \: \bsigma_h^R + \cP_h \bff , 1 )_{L^2 (T)} = 0
  \label{eq:average_equilibration}
\end{equation}
for all $T \in \cT_h$ {\colb (note that $\cP_h \bff$ may be replaced by its average over $T$, cf. \cite[Lemma 1]{Kim:12a}).}
Combined with (\ref{eq:interior_degrees}), $\div \: \bsigma_h^R + \cP_h \bff = \bzero$ holds if
$\bsigma_h^R$ is computed from the nonconforming Galerkin approximation.

The above construction may be divided into elementary substeps using the following decomposition of the space
$\bSigma_h^R$:
$\bSigma_h^R = \bSigma_h^{R,0} \oplus \bSigma_h^{R,1} \oplus \bSigma_h^{R,\Delta}$ with $\bSigma_h^{R,0}$ and
$\bSigma_h^{R,0} \oplus \bSigma_h^{R,1}$ being the corresponding subspaces of the lowest-order Raviart-Thomas
elements $RT_0$ and Brezzi-Douglas-Marini elements $BDM_1$, respectively. In particular,
\begin{equation}
  \begin{split}
    \bSigma_h^{R,1} & = \{ \bsigma_h \in H_{\Gamma_N} (\div,\Omega)^2 : \left. \bsigma_h \right|_T \in P_1 (T)^{2 \times 2}
    \mbox{ for all } T \in \cT_h \: , \\
    & \;\;\;\;\;\;\; \langle \bn_E \cdot (\bsigma_h \cdot \bn_E) , 1 \rangle_E
    = \langle \bt_E \cdot (\bsigma_h \cdot \bn_E) , 1 \rangle_E
    = 0
    \mbox{ for all } E \in \cE_h \} \: , \\
    \bSigma_h^{R,\Delta} & = \{ \bsigma_h \in {\colb \bSigma_h^R} : \left. \div \: \bsigma_h \right|_T \in P_1 (T)^2
    \mbox{ for all } T \in \cT_h \: , \\
    & \;\;\;\;\;\;\; \bsigma_h \cdot \bn_E = \bzero \mbox{ on } E \mbox{ for all } E \in \cE_h \} \: .
  \end{split}
  \label{eq:RT0_BDM1_RT1}
\end{equation}
The above stress reconstruction procedure can be rewritten in the following steps:\\
{\em Step 1.} Compute $\bsigma_h^{R,0} \in \bSigma_h^{R,0}$ such that
$\langle \bn_E \cdot (\bsigma_h^{R,0} \cdot \bn_E) , 1 \rangle_E = \langle \bn_E \cdot (\bsigma_h \cdot \bn_E) , 1 \rangle_E$\\
\phantom{Step 1.}
and $\langle \bt_E \cdot (\bsigma_h^{R,0} \cdot \bn_E) , 1 \rangle_E = \langle \bt_E \cdot (\bsigma_h \cdot \bn_E) , 1 \rangle_E$
holds for all $E \in \cE_h$.\\
{\em Step 2.} Compute $\bsigma_h^{R,1} \in \bSigma_h^{R,1}$ s.t.
$\langle \bn_E \cdot (\bsigma_h^{R,1} \cdot \bn_E) , q_E \rangle_E
= \langle \{ \! \{ \bn_E \cdot (\bsigma_h \cdot \bn_E) \} \! \} , q_E \rangle_E$\\
\phantom{Step 2.}
and $\langle \bt_E \cdot (\bsigma_h^{R,1} \cdot \bn_E) , q_E \rangle_E
= \langle \{ \! \{ \bt_E \cdot (\bsigma_h \cdot \bn_E) \} \! \} , q_E \rangle_E$ holds for all $q_E \in P_1 (E)$\\
\phantom{Step 2.}
with $\langle q_E , 1 \rangle_E = 0$ for all $E \in \cE_h$.\\
{\em Step 3.} Compute $\bsigma_h^{R,\Delta} \in \bSigma_h^{R,\Delta}$ such that
$( \div \: \bsigma_h^{R,\Delta} , \bq_T )_{L^2 (T)} = ( \cP_h \bff , \bq_T )_{L^2 (T)}$ holds
\phantom{Step 3.}
for all $\bq_T \in P_1 (T)^2$ with $( \bq_T , 1 )_{L^2 (\Omega)} = 0$ for all $T \in \cT_h$.\\
Set $\bsigma_h^R = \bsigma_h^{R,0} + \bsigma_h^{R,1} + \bsigma_h^{R,\Delta}$.

\section{A preliminary version of our a posteriori error estimator}

\label{sec-first_estimator}

In this section we present an a posteriori error estimator based on the stress reconstruction $\bsigma_h^R$. It will be
shown to be robust in the incompressible limit $\lambda \rightarrow \infty$ but its reliability bound contains two constants which
are generally not known and may become rather large depending on the shapes of $\Omega$, $\Gamma_D$
and $\Gamma_N$. Therefore, we will modify this a posteriori error estimator in subsequent sections in order to get a guaranteed
upper bound for the error which involves only constants that are at our disposal. The derivation of the more straightforward error
estimator in this section does, however, contribute to the understanding of the situation and already contains some of the crucial
steps of the analysis. One of the unknown constants appearing in the reliability bound is $C_K$, the constant from the Korn
inequality (\ref{eq:discrete_Korn}) which, roughly speaking, becomes big when $\Gamma_D$ is small. The other constant
comes from the following dev-div inequality which was proved under different assumptions in \cite[Prop. 9.1.1]{BofBreFor:13},
\cite{CarDol:98} and \cite{CaiSta:04}:
\begin{equation}
  \| \btau \| \leq C_A \left( \| \bdev \: \btau \| + \| \div \: \btau \| \right)
  \mbox{ for all } \btau \in H_{\Gamma_N} (\div,\Omega)^2 \: ,
  \label{eq:dev-div-inequality}
\end{equation}
where $\bdev$ denotes the trace-free part given by $\bdev \: \btau = \btau - (\tr \: \btau)/2$. Note that the constant $C_A$ may
become big if $\Gamma_N$ is small (cf., for example, \cite[Sect. 2]{MueSta:16}){\colb ; the case $\Gamma_N = \emptyset$
requires an additional constraint $( \tr \: \btau , 1 ) = 0$}.

Our aim is to estimate the error in the energy norm, expressed in terms of $\bu - \bu_h$ and $p - p_h$. Since
(\ref{eq:disp_pressure}) implies $\div \: \bu = p / \lambda$ and since $\div \: \bu_h = p_h / \lambda$ follows from
(\ref{eq:disp_pressure_nonconforming}), the energy norm is given by
\begin{equation}
  \begin{split}
    ||| ( \bu - \bu_h , p - p_h ) ||| & = ( 2 \mu \bepsilon (\bu - \bu_h) + ( p - p_h ) \bI , \bepsilon (\bu - \bu_h) )_h^{1/2} \\
    & = \left( 2 \mu \| \bepsilon (\bu - \bu_h) \|_h^2 + \frac{1}{\lambda} \| p - p_h \|^2 \right)^{1/2} \: .
  \end{split}
  \label{eq:energy_norm}
\end{equation}
Local versions of the energy norm in (\ref{eq:energy_norm}), where the integration is limited to a subset
$\omega \subset \Omega$, will be denoted by $||| ( \: \cdot \: , \: \cdot \: ) |||_\omega$.
The definition of the stress directly leads to
\begin{equation}
  \tr \: \bsigma = 2 \mu \div \: \bu + 2 p = 2 (\mu + \lambda) \div \: \bu \mbox{ and }
  \tr \: \bsigma_h = 2 \mu \div \: \bu_h + 2 p_h = 2 (\mu + \lambda) \div \: \bu_h \: ,
\end{equation}
which implies
\begin{equation}
  \bepsilon (\bu) = \frac{1}{2 \mu} \left( \bsigma - p \bI \right)
  = \frac{1}{2 \mu} \left( \bsigma - \frac{\lambda}{2 (\mu + \lambda)} (\tr \: \bsigma) \bI \right)
  =: \cA \bsigma
  \label{eq:strain_stress_relation}
\end{equation}
and
\begin{equation}
  \bepsilon (\bu_h) = \frac{1}{2 \mu} \left( \bsigma_h - p_h \bI \right)
  = \frac{1}{2 \mu} \left( \bsigma_h - \frac{\lambda}{2 (\mu + \lambda)} (\tr \: \bsigma_h) \bI \right)
  = \cA \bsigma_h \: .
  \label{eq:strain_stress_relation_discrete}
\end{equation}
Note that (\ref{eq:strain_stress_relation}) and (\ref{eq:strain_stress_relation_discrete}) remain valid in the
incompressibe limit $\lambda \rightarrow \infty$, where $\cA$ tends to the projection onto the trace-free part, scaled by
$1/(2 \mu)$. Also note that our stress representation is purely two-dimensional while the true stress in plane-strain
two-dimensional elasticity possesses a three-dimensional component $\sigma_{33} = p$. The following derivation may
equivalently be done based on this three-dimensional stress using a slightly different mapping $\cA_3$ instead of $\cA$.
At the end, however, the result is the same and we therefore stick to the unphysical but simpler two-dimensional stresses.

The inner product $( \: \cdot \: , \: \cdot \: )_\cA := ( \: \cA (\cdot) \: , \: \cdot \: )$ induces a norm $\| \: \cdot \: \|_\cA$
on the divergence-free subspace of $H_{\Gamma_N} (\div,\Omega)^2$ due to
\begin{equation}
  \begin{split}
    \| \btau \|_\cA^2 = ( \cA \btau , \btau )
    & = \frac{1}{2 \mu} ( \btau - \frac{\lambda}{2 (\mu + \lambda)} (\tr \: \btau) \bI , \btau ) \\
    & = \frac{1}{2 \mu} ( \bdev \: \btau , \btau ) + \frac{1}{4 (\mu + \lambda)} ( (\tr \: \btau) \bI , \btau ) \\
    & = \frac{1}{2 \mu} \| \bdev \: \btau \|^2 + \frac{1}{4 (\mu + \lambda)} \| \tr \: \btau \|^2
    \geq \frac{1}{2 \mu} \| \bdev \: \btau \|^2 \: ,
  \end{split}
\end{equation}
which, combined with (\ref{eq:dev-div-inequality}), gives
\begin{equation}
  \sqrt{2 \mu} C_A \| \btau \|_\cA \geq \| \btau \| \mbox{ for all } \btau \in H_{\Gamma_N} (\div,\Omega)^2 \mbox{ with }
  \div \: \btau = \bzero \: ,
  \label{eq:norm_A_equivalence}
\end{equation}
{\colb again assuming $( \tr \: \btau , 1 ) = 0$ if $\Gamma_N = \emptyset$. For the derivation of our preliminary estimator in
this section, however, we need to assume that $\Gamma_N \neq \emptyset$. This restriction will be overcome by the improved
estimator in the next section.} For the element-wise contribution to the energy norm in (\ref{eq:energy_norm}),
\begin{equation}
  \begin{split}
    ||| ( \bu - \bu_h , p - p_h ) |||_T^2
    & = 2 \mu \| \bepsilon (\bu - \bu_h) \|_{L^2 (T)}^2 + \frac{1}{\lambda} \| p - p_h \|_{L^2 (T)}^2 \\
    & = \| 2 \mu \bepsilon (\bu - \bu_h) - (p - p_h) \bI  \|_{\cA,T}^2
  \end{split}
  \label{eq:energy_norm_A_local}
\end{equation}
holds, where $\| \: \cdot \: \|_{\cA,T} = ( \cA (\cdot) , \: \cdot \: )_{L^2 (T)}^{1/2}$ denotes the corresponding element-wise norm.
With the corresponding nonconforming version $\| \: \cdot \: \|_{\cA,h} = ( \cA (\cdot) , \: \cdot \: )_h^{1/2}$ , summing
(\ref{eq:energy_norm_A_local}) over all
elements leads to
\begin{equation}
  \| 2 \mu \bepsilon (\bu - \bu_h) - (p - p_h) \bI  \|_{\cA,h}^2 = ||| ( \bu - \bu_h , p - p_h ) |||^2 \: .
  \label{eq:energy_norm_A}
\end{equation}

Our a posteriori error estimator will be based on $\| \bsigma_h^R - 2 \mu \bepsilon (\bu_h) - p_h \bI  \|_{\cA,h}$, the
difference between post-processed and reconstructed stress and we start the derivation from
this term. Let us assume that (\ref{eq:disp_pressure}) holds with $\bff = \cP_h \bff$ and $\bg = \cP_h^{\Gamma_N} \bg$ since
this approximation can be treated as an oscillation term (see the remarks at the end of this section). Inserting the relation
$\bsigma = 2 \mu \bepsilon (\bu) + p \bI$ which holds for the exact solution, we obtain
\begin{equation}
  \begin{split}
  \| \bsigma_h^R & - 2 \mu \bepsilon (\bu_h) - p_h \bI  \|_{\cA,h}^2
  = \| \bsigma - \bsigma_h^R - 2 \mu \bepsilon (\bu - \bu_h) - (p - p_h) \bI  \|_{\cA,h}^2 \\
  & = \| \bsigma - \bsigma_h^R \|_\cA^2 + \| 2 \mu \bepsilon (\bu - \bu_h) - (p - p_h) \bI  \|_{\cA,h}^2 \\
  & \;\;\; - 2 ( \bsigma - \bsigma_h^R , \cA ( 2 \mu \bepsilon (\bu - \bu_h) - (p - p_h) \bI ) )_h \\
  & = \| \bsigma - \bsigma_h^R \|_\cA^2 + \| 2 \mu \bepsilon (\bu - \bu_h) - (p - p_h) \bI  \|_{\cA,h}^2
  - 2 ( \bsigma - \bsigma_h^R , \bepsilon (\bu - \bu_h) )_h \: ,
  \end{split}
  \label{eq:first_estimator_first}
\end{equation}
where (\ref{eq:strain_stress_relation}) and (\ref{eq:strain_stress_relation_discrete}) were used in the last equality. Our goal is to
estimate the middle term on the right-hand side which by (\ref{eq:energy_norm_A}) coincides with the energy norm. The mixed
term at the end of (\ref{eq:first_estimator_first}) can be rewritten as
\begin{equation}
  \begin{split}
    ( \bsigma - \bsigma_h^R , \: & \bepsilon (\bu - \bu_h) )_h
    = ( \bsigma - \bsigma_h^R , \bnabla (\bu - \bu_h) )_h - ( \bsigma - \bsigma_h^R , \as \: \bnabla (\bu - \bu_h) )_h \\
    & = \sum_{E \in \cE_h} \langle (\bsigma - \bsigma_h^R) \cdot \bn , \llbracket \bu - \bu_h \rrbracket_E \rangle_E
    - ( \bsigma - \bsigma_h^R , \as \: \bnabla (\bu - \bu_h) )_h \\
    & = \sum_{E \in \cE_h} \langle (\bsigma - \bsigma_h^R) \cdot \bn , \llbracket \bu_h^C - \bu_h \rrbracket_E \rangle_E
    - ( \bsigma - \bsigma_h^R , \as \: \bnabla (\bu - \bu_h) )_h \\
    & = ( \bsigma - \bsigma_h^R , \bnabla (\bu_h^C - \bu_h) )_h - ( \bsigma - \bsigma_h^R , \as \: \bnabla (\bu - \bu_h) )_h \\
    & = ( \bsigma - \bsigma_h^R , \bepsilon (\bu_h^C - \bu_h) )_h - ( \bsigma - \bsigma_h^R , \as \: \bnabla (\bu - \bu_h^C) )_h \\
    & = ( \bsigma - \bsigma_h^R , \bepsilon (\bu_h^C - \bu_h) )_h + ( \as \: \bsigma_h^R , \as \: \bnabla (\bu - \bu_h^C) )_h \\
    & = ( \bsigma - \bsigma_h^R , \bepsilon (\bu_h^C - \bu_h) )_h + ( \as \: \bsigma_h^R , \bnabla (\bu - \bu_h^C) )_h
  \end{split}
  \label{eq:first_estimator_second}
\end{equation}
where $\llbracket \: \cdot \: \rrbracket_E$ denotes the jump across $E$ and the properties $\div \: ( \bsigma - \bsigma_h^R) = 0$
{\colb in $\Omega$, $(\bsigma - \bsigma_h^R) \cdot \bn = 0$ on $\Gamma_N$ were}
used. In (\ref{eq:first_estimator_second}), $\bu_h^C \in H_{\Gamma_D}^1 (\Omega)^d$ denotes any conforming
approximation to $\bu_h$. This leads to the upper bound
\begin{align*}
  2 ( \bsigma - \: & \bsigma_h^R , \bepsilon (\bu - \bu_h) )_h \leq 2 \| \bsigma - \bsigma_h^R \| \: \| \bepsilon (\bu_h^C - \bu_h) \|_h
  + 2 \| \as \: \bsigma_h^R \| \: \| \bnabla (\bu - \bu_h^C) \|_h \\
  & \leq 2 \sqrt{2 \mu} C_A \| \bsigma - \bsigma_h^R \|_\cA \| \bepsilon (\bu_h^C - \bu_h) \|_h
  + 2 C_K \| \as \: \bsigma_h^R \| \: \| \bepsilon (\bu - \bu_h^C) \|_h \\
  & \leq \| \bsigma - \bsigma_h^R \|_\cA^2 + 2 \mu C_A^2 \| \bepsilon (\bu_h^C - \bu_h) \|_h^2
  + \frac{2}{\mu} C_K^2 \| \as \: \bsigma_h^R \|^2 + \frac{\mu}{2} \| \bepsilon (\bu - \bu_h^C) \|_h^2 \\
  & \leq \| \bsigma - \bsigma_h^R \|_\cA^2 + (2 C_A^2 + 1) \mu \| \bepsilon (\bu_h^C - \bu_h) \|_h^2
  + \frac{2}{\mu} C_K^2 \| \as \: \bsigma_h^R \|^2 + \mu \| \bepsilon (\bu - \bu_h) \|_h^2 \: .
\end{align*}
Inserting this into (\ref{eq:first_estimator_first}) gives us
\begin{align*}
  \| 2 \mu \bepsilon (\bu - \bu_h) & - (p - p_h) \bI  \|_{\cA,h}^2 \leq \| \bsigma_h^R - 2 \mu \bepsilon (\bu_h) - p_h \bI  \|_{\cA,h}^2 \\
  & + (2 C_A^2 + 1) \mu \| \bepsilon (\bu_h^C - \bu_h) \|_h^2
  + \frac{2}{\mu} C_K^2 \| \as \: \bsigma_h^R \|^2 + \mu \| \bepsilon (\bu - \bu_h) \|_h^2 \: ,
\end{align*}
which, combined with (\ref{eq:energy_norm_A}), implies
\begin{equation}
  \begin{split}
    ||| ( \bu - \bu_h , p - p_h ) |||^2 & \leq 2 \big( \| \bsigma_h^R - 2 \mu \bepsilon (\bu_h) - p_h \bI  \|_{\cA,h}^2 \\
    & \;\;\; + (2 C_A^2 + 1) \mu \| \bepsilon (\bu_h^C - \bu_h) \|_h^2 + \frac{2}{\mu} C_K^2 \| \as \: \bsigma_h^R \|^2 \big) \: .
  \end{split}
  \label{eq:first_estimator_final}
\end{equation}

Finally, the treatment of the right-hand side approximation as an oscillation term is described. To this end, let $( \bu , p )$ and
$( \widetilde{\bu} , \tilde{p} )$ be the solutions of (\ref{eq:disp_pressure}) with right-hand side data $( \bff , \bg )$ and
$( \cP_h \bff , \cP_h^{\Gamma_N} \bg )$, respectively. Taking the difference and inserting $\widetilde{\bu} - \bu$ as test function
leads to
\begin{equation}
  \begin{split}
    2 \mu & \| \bepsilon (\widetilde{\bu} - \bu) \|^2 + \frac{1}{\lambda} \| \tilde{p} - p \|^2
    = ( \cP_h \bff - \bff , \widetilde{\bu} - \bu )
    + \langle \cP_h^{\Gamma_N} \bg - \bg , \widetilde{\bu} - \bu \rangle_{L^2 (\Gamma_N)} \\
    & = ( \cP_h \bff - \bff , \widetilde{\bu} - \bu - \cP_h ( \widetilde{\bu} - \bu ) )
    + \langle \cP_h^{\Gamma_N} \bg - \bg ,
    \widetilde{\bu} - \bu - \cP_h^{\Gamma_N} ( \widetilde{\bu} - \bu ) \rangle_{L^2 (\Gamma_N)} \: .
  \end{split}
  \label{eq:oscillation_term}
\end{equation}
Using the local nature of the projections $\cP_h$ and $\cP_h^{\Gamma_N}$, (\ref{eq:oscillation_term}) implies
\begin{equation}
  \begin{split}
    2 \mu \| \bepsilon (\widetilde{\bu} - \bu) \|^2 + \frac{1}{\lambda} \| \tilde{p} - p & \|^2
    \leq \sum_{T \in \cT_h} \| \bff - \cP_h \bff \|_{L^2 (T)} h_T \| \nabla (\widetilde{\bu} - \bu) \|_{L^2 (T)} \\
    + & \sum_{E \subset \Gamma_N}
    \| \bg - \cP_h^{\Gamma_N} \bg \|_{L^2 (E)} h_E^{1/2} \| \widetilde{\bu} - \bu \|_{H^{1/2} (E)} \\
    \leq & \left( \sum_{T \in \cT_h} h_T^2 \| \bff - \cP_h \bff \|_{L^2 (T)}^2 \right)^{1/2} \| \nabla (\widetilde{\bu} - \bu) \| \\
    + & \left( \sum_{E \subset \Gamma_N} h_E \| \bg - \cP_h^{\Gamma_N} \bg \|_{L^2 (E)}^2 \right)^{1/2}
    \| \widetilde{\bu} - \bu \|_{H^{1/2} (\Gamma_N)} \\
    \leq C \left( \sum_{T \in \cT_h} h_T^2 \| \bff - \cP_h \bff \right. & \|_{L^2 (T)}^2 \left.
    + \sum_{E \subset \Gamma_N} h_E \| \bg - \cP_h^{\Gamma_N} \bg \|_{L^2 (E)}^2 \right)^{1/2}
    \| \bepsilon(\widetilde{\bu} - \bu) \|
  \end{split}
  \label{eq:oscillation_estimate_first}
\end{equation}
with a constant $C$. This proves that
\begin{equation}
  \begin{split}
    ||| ( \widetilde{\bu} - \bu , & \tilde{p} - p ) ||| \\
    & \leq C \left( \sum_{T \in \cT_h} h_T^2 \| \bff - \cP_h \bff \|_{L^2 (T)}^2
    + \sum_{E \subset \Gamma_N} h_E \| \bg - \cP_h^{\Gamma_N} \bg \|_{L^2 (E)}^2 \right)^{1/2}
  \end{split}
  \label{eq:oscillation_estimate}
\end{equation}
and therefore the right-hand side approximation may be treated as an oscillation term.

\section{An improved and guaranteed a posteriori error estimator}

\label{sec-guaranteed_estimator}

We will now construct an improved a posteriori error estimator which avoids the unknown constants $C_A$ and $C_K$ present
in (\ref{eq:first_estimator_final}). To this end, we perform two modifications, one associated with the stress reconstruction
$\bsigma_h^R$ and the other one with the conforming approximation $\bu_h^C$.

The modified stress reconstruction $\bsigma_h^S$ will be constructed in such a way that it satisfies
\begin{equation}
  ( \as \: \bsigma_h^S , \bJ )_{L^2 (T)} = 0 \mbox{ with } \bJ = \begin{pmatrix} 0 & 1 \\ -1 & 0 \end{pmatrix} \mbox{ for all }
  T \in \cT_h \: .
  \label{eq:average_symmetry}
\end{equation}
How this can be achieved will be the topic of section \ref{sec-average_symmetry}. If (\ref{eq:average_symmetry}) is satisfied,
then the corresponding last term in (\ref{eq:first_estimator_second}) can be rewritten as
\begin{equation}
  \begin{split}
    ( \as \: \bsigma_h^S , \nabla (\bu - \bu_h^C) )_h & = \sum_{T \in \cT_h} ( \as \: \bsigma_h^S , \nabla (\bu - \bu_h^C) )_{L^2 (T)}
    \\
    & = \sum_{T \in \cT_h} ( \as \: \bsigma_h^S , \nabla (\bu - \bu_h^C) - \alpha_T \bJ )_{L^2 (T)}
  \end{split}
  \label{eq:improved_antisymmetric_step}
\end{equation}
for any $\alpha_T \in \R$, $T \in \cT_h$. Since for any $\brho_T \in RM (T)$, the space of rigid-body modes on $T$, we have
$\nabla \brho_T = \alpha_T \bJ$ with $\alpha_T \in \R$, (\ref{eq:improved_antisymmetric_step}) leads to
\begin{align*}
  ( \as \: \bsigma_h^S , \nabla (\bu - \bu_h^C) )_h
  & = \sum_{T \in \cT_h} ( \as \: \bsigma_h^S , \nabla (\bu - \bu_h^C - \brho_T ) )_{L^2 (T)} \\
  & \leq \sum_{T \in \cT_h} \| \as \: \bsigma_h^S \|_{L^2 (T)} \: \| \nabla (\bu - \bu_h^C - \brho_T ) \|_{L^2 (T)} \\
  & \leq \| \as \: \bsigma_h^S \| \left( \sum_{T \in \cT_h} \| \nabla (\bu - \bu_h^C - \brho_T ) \|_{L^2 (T)}^2 \right)^{1/2} \: .
\end{align*}
Since $\brho_T \in RM (T)$ is arbitrary, this implies
\begin{equation}
  ( \as \: \bsigma_h^S , \nabla (\bu - \bu_h^C) )_h
  \leq \| \as \: \bsigma_h^S \| \left( \sum_{T \in \cT_h} \inf_{\brho_T \in RM (T)} \| \nabla (\bu - \bu_h^C - \brho_T ) \|_{L^2 (T)}^2
  \right)^{1/2} \: .
  \label{eq:improved_antisymmetric}
\end{equation}
We may now use Korn's inequality of the form
\begin{equation}
  \inf_{\brho_T \in RM (T)} \| \nabla (\bu - \bu_h^C - \brho_T ) \|_{L^2 (T)}
  \leq C_{K,T}^\prime \| \bepsilon (\bu - \bu_h^C) \|_{L^2 (T)}
  \label{eq:Korn_local}
\end{equation}
with a constant $C_{K,T}^\prime$ which depends only on the shape of $T$, in particular, on the smallest interior angle
(see \cite[Sect. 3]{Kim:11b}, \cite{Hor:95} for detailed formulas). From (\ref{eq:improved_antisymmetric}) and (\ref{eq:Korn_local})
we are therefore led to
\begin{equation}
  ( \as \: \bsigma_h^S , \nabla (\bu - \bu_h^C) )_h
  \leq C_K^\prime \| \as \: \bsigma_h^S \| \: \| \bepsilon (\bu - \bu_h^C) \|_h
  \label{eq:improved_antisymmetric_final}
\end{equation}
with $C_K^\prime := \max \{ C_{K,T}^\prime : T \in \cT_h \}$. The constant $C_K^\prime$ is therefore fully computable and,
moreover, of moderate size for shape-regular triangulations. 

The constant $C_A$ may be avoided by enforcing the constraint
\begin{equation}
  ( \div \: \bu_h^C , 1 )_{L^2 (T)} = ( \div \: \bu_h , 1 )_{L^2 (T)} \mbox{ for all }T \in \cT_h \: .
  \label{eq:divergence_constraint}
\end{equation}
We will discuss possible approaches for the construction of such a conforming approximation in section
\ref{sec-divergence_conforming}. If (\ref{eq:divergence_constraint}) is satisfied, then the first term on the right-hand side in
(\ref{eq:first_estimator_second}) can be rewritten as
\begin{equation}
  \begin{split}
    ( \bsigma - \bsigma_h^S , \: & \bepsilon (\bu_h^C - \bu_h) )_h \\
    & = \sum_{T \in \cT_h} \left( ( \bsigma - \bsigma_h^S , \bepsilon (\bu_h^C - \bu_h) )_{L^2 (T)}
    - ( \alpha_T , \div (\bu_h^C - \bu_h) )_{L^2 (T)} \right) \\
    & = \sum_{T \in \cT_h} ( \bsigma - \bsigma_h^S - \alpha_T \bI , \bepsilon (\bu_h^C - \bu_h) )_{L^2 (T)}
  \end{split}
  \label{eq:improved_conforming_step}
\end{equation}
with $\alpha_T \in \R$, $T \in \cT_h$. Since $\alpha_T \in \R$ can be chosen arbitrarily for each $T \in \cT_h$, we obtain
\begin{equation}
  ( \bsigma - \bsigma_h^S , \bepsilon (\bu_h^C - \bu_h) )_h
  \leq \sum_{T \in \cT_h} \inf_{\alpha_T \in \R} \| \bsigma - \bsigma_h^S - \alpha_T \bI \|_{L^2 (T)}
  \| \bepsilon (\bu_h^C - \bu_h) \|_{L^2 (T)} \: .
  \label{eq:improved_conforming_estimate_step}
\end{equation}
Since
\begin{align*}
  \| \bsigma - \bsigma_h^S - \alpha_T \bI \|_{L^2 (T)}^2
  & = \| \bdev (\bsigma - \bsigma_h^S) \|_{L^2 (T)}^2
  + \| \frac{1}{2} \tr (\bsigma - \bsigma_h^S - \alpha_T \bI) \bI \|_{L^2 (T)}^2 \\
  & = \| \bdev (\bsigma - \bsigma_h^S) \|_{L^2 (T)}^2
  + 2 \| \frac{1}{2} \tr (\bsigma - \bsigma_h^S - \alpha_T \bI) \|_{L^2 (T)}^2 \: ,
\end{align*}
the term on the right-hand side of (\ref{eq:improved_conforming_estimate_step}) is minimized, if
$( \tr (\bsigma - \bsigma_h^S - \alpha_T \bI) , 1 )_{L^2 (T)} = 0$ holds. Using the version of the dev-div inequality from
\cite[Prop. 9.1.1]{BofBreFor:13} gives, on each element $T \in \cT_h$,
\begin{equation}
  \begin{split}
    \| \btau \|_{L^2 (T)} \leq C_{A,T}^\prime \| \bdev \: \btau \|_{L^2 (T)} & \mbox{ for all } \btau \in H (\div,T)^2 \\
    & \mbox{ with } \div \: \btau = \bzero \mbox{ and } ( \tr \: \btau , 1 )_{L^2 (T)} = 0
  \end{split}
  \label{eq:dev-div-inequality_local}
\end{equation}
with a constant $C_{A,T}^\prime$ which again only depends on the shape-regularity of the triangulation $\cT_h$. In fact,
these constants coincide with those from (\ref{eq:Korn_local}) which is contained in the following result.

\begin{proposition}
  For a two-dimensional triangulation $\cT_h$, the constants $C_{K,T}^\prime$ in (\ref{eq:Korn_local}) and $C_{A,T}^\prime$ in
  (\ref{eq:dev-div-inequality_local}) are identical.
\end{proposition}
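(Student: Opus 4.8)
The plan is to identify the optimal constants in (\ref{eq:Korn_local}) and (\ref{eq:dev-div-inequality_local}) with one and the same Rayleigh quotient on the triangle $T$, after which the asserted identity is immediate. I would begin with the Korn constant. The skew part $\as \bnabla \bv$ is a pointwise scalar multiple of $\bJ$ satisfying $\| \as \bnabla \bv \|_{L^2(T)}^2 = \frac12 \| \curl \bv \|_{L^2(T)}^2$, and $\bnabla \brho \in \{ c \bJ : c \in \R \}$ for $\brho \in RM(T)$. Hence, using $\bepsilon(\brho) = \bzero$ and the orthogonality of the symmetric and skew parts of $\bnabla (\bv - \brho)$,
\begin{equation*}
  \inf_{\brho \in RM(T)} \| \bnabla (\bv - \brho) \|_{L^2(T)}^2
  = \| \bepsilon(\bv) \|_{L^2(T)}^2 + \tfrac12 \| \curl \bv - \overline{\curl \bv} \|_{L^2(T)}^2 ,
\end{equation*}
where $\overline{(\, \cdot \,)}$ is the mean over $T$. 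Since this quotient is invariant under $\bv \mapsto \bv + \brho$, $\brho \in RM(T)$, the best constant $C_{K,T}^\prime$ may be evaluated as a supremum over those $\bv \in H^1(T)^2$ with $\overline{\curl \bv} = 0$, equivalently $\int_T \curl \bv = 0$; for those, the infimum above equals $\| \bnabla \bv \|_{L^2(T)}^2$ and $\| \bepsilon(\bv) \|_{L^2(T)}^2 = \| \bnabla \bv \|_{L^2(T)}^2 - \frac12 \| \curl \bv \|_{L^2(T)}^2$, so that
\begin{equation*}
  (C_{K,T}^\prime)^2 = \sup \frac{\| \bnabla \bv \|_{L^2(T)}^2}{\| \bnabla \bv \|_{L^2(T)}^2 - \tfrac12 \| \curl \bv \|_{L^2(T)}^2} ,
\end{equation*}
the supremum running over $\bv \in H^1(T)^2$ with $\int_T \curl \bv = 0$ and $\bnabla \bv \neq \bzero$.

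Next I would treat the dev--div constant by the stream-function representation, available because $T$ is simply connected: every $\btau \in H(\div,T)^2$ with $\div \btau = \bzero$ has its two rows of the form $(\partial_2 \phi_1 , -\partial_1 \phi_1)$ and $(\partial_2 \phi_2 , -\partial_1 \phi_2)$ with $\bphi = (\phi_1,\phi_2) \in H^1(T)^2$. A direct computation gives $\| \btau \|_{L^2(T)} = \| \bnabla \bphi \|_{L^2(T)}$ and $\tr \btau = \partial_2 \phi_1 - \partial_1 \phi_2 = -\curl \bphi$, so that the side condition $( \tr \btau , 1 )_{L^2(T)} = 0$ becomes $\int_T \curl \bphi = 0$. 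Together with $\| \bdev \, \btau \|_{L^2(T)}^2 = \| \btau \|_{L^2(T)}^2 - \frac12 \| \tr \btau \|_{L^2(T)}^2$ this yields
\begin{equation*}
  (C_{A,T}^\prime)^2 = \sup \frac{\| \bnabla \bphi \|_{L^2(T)}^2}{\| \bnabla \bphi \|_{L^2(T)}^2 - \tfrac12 \| \curl \bphi \|_{L^2(T)}^2} ,
\end{equation*}
over $\bphi \in H^1(T)^2$ with $\int_T \curl \bphi = 0$ and $\bnabla \bphi \neq \bzero$. This is literally the same quantity as the one obtained for $(C_{K,T}^\prime)^2$, whence $C_{K,T}^\prime = C_{A,T}^\prime$.

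The step that requires the most care is the normalization in the first part: it is precisely the optimal rigid-body correction in the second Korn inequality that enforces $\int_T \curl \bv = 0$, and this is exactly the constraint $( \tr \btau , 1 )_{L^2(T)} = 0$ transported through the stream-function map; absent that constraint on $\btau$, the two constants would be comparable but not equal. The remaining points are routine: the existence of an $H^1$ stream function on a triangle, the fact that once $\int_T \curl (\, \cdot \,) = 0$ is imposed the exclusions $\bepsilon(\, \cdot \,) \neq \bzero$, $\bdev \, \btau \neq \bzero$ and $\bnabla (\, \cdot \,) \neq \bzero$ all single out the same set of non-constant $H^1(T)^2$-fields, and the finiteness of the supremum, which amounts to Korn's inequality on $T$.
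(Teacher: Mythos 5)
Your proof is correct and rests on the same key idea as the paper's: representing the divergence-free tensor via a vector stream function $\btau = \bnabla^\perp \bphi$, under which $\|\btau\|$ becomes $\|\bnabla\bphi\|$, the trace constraint becomes a mean-zero curl condition, and $\|\bdev\,\btau\|$ becomes $\|\bepsilon(\bphi)\|$, so that the dev--div inequality transforms exactly into the element-wise Korn inequality. Your explicit identification of both optimal constants with a single Rayleigh quotient is a slightly more quantitative packaging of the same correspondence, but not a genuinely different route.
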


\begin{proof}
  We start from (\ref{eq:dev-div-inequality_local}) and note that $(\tr \: \btau , 1)_{L^2 (T)} = 0$ implies
  \[
    \| \btau \|_{L^2 (\Omega)} = \inf_{\alpha_T \in \R} \| \btau - \alpha_T \bI \|_{L^2 (T)} \: .
  \]
  Moreover $\div \: \btau = \bzero$ implies the existence of $\bpsi \in H^1 (T)$ with $\btau = \bnabla^\perp \bpsi$ 
  (cf. \cite[Theorem I.3.1]{GirRav:86} and therefore (\ref{eq:dev-div-inequality_local}) turns into
  \[
    \inf_{\alpha_T \in \R} \| \bnabla^\perp \bpsi - \alpha_T \bI \|_{L^2 (T)}
    \leq C_{A,T}^\prime \| \bdev \: (\bnabla^\perp \bpsi) \|_{L^2 (T)} \: .
  \]
  Component-wise this has the form
  \begin{align*}
    \inf_{\alpha_T \in \R} & \left\|
    \begin{pmatrix} \partial_2 \psi_1 - \alpha_T & - \partial_1 \psi_1 \\
                             \partial_2 \psi_2 & - \partial_1 \psi_2 - \alpha_T
    \end{pmatrix} \right\|_{L^2 (T)} \\
    & \hspace{2cm} \leq C_{A,T}^\prime \left\|
    \begin{pmatrix} \frac{1}{2} (\partial_2 \psi_1 + \partial_1 \psi_2) & - \partial_1 \psi_1 \\
                             \partial_2 \psi_2 & - \frac{1}{2} (\partial_1 \psi_2 + \partial_2 \psi_1)
    \end{pmatrix} \right\|_{L^2 (T)} \: ,
  \end{align*}
  which may be rewritten as
  \[
    \inf_{\alpha_T \in \R} \| \bnabla \bpsi - \alpha_T \bJ \|_{L^2 (T)}
    \leq C_{A,T}^\prime \| \bepsilon \: (\bnabla \bpsi) \|_{L^2 (T)} \: .
  \]
  This is equivalent to the inequality (\ref{eq:Korn_local}) with $\bpsi = \bu - \bu_h^C$.
\end{proof}

Combining (\ref{eq:improved_conforming_estimate_step}) and (\ref{eq:dev-div-inequality_local}) with $C_{A,T}^\prime$
replaced by $C_{K,T}^\prime$ leads to
\begin{align*}
  ( \bsigma - \bsigma_h^S , \bepsilon (\bu_h^C - \bu_h) )_h
  \leq C_K^\prime \| \bdev (\bsigma - \bsigma_h^S) \| \: \| \bepsilon (\bu_h^C - \bu_h) \|_h \: .
\end{align*}
Combined with (\ref{eq:norm_A_equivalence}), this implies
\begin{equation}
  ( \bsigma - \bsigma_h^S , \bepsilon (\bu_h^C - \bu_h) )_h
  \leq \sqrt{2 \mu} C_K^\prime \| \bsigma - \bsigma_h^S \|_\cA \: \| \bepsilon (\bu_h^C - \bu_h) \|_h \: .
  \label{eq:improved_conforming_estimate}
\end{equation}

Inserting our improved estimates (\ref{eq:improved_antisymmetric_final}) and (\ref{eq:improved_conforming_estimate}) into
(\ref{eq:first_estimator_first}) and (\ref{eq:first_estimator_second}), we arrive at
\begin{align*}
  ||| ( \bu - \bu_h , p - p_h ) |||^2 \leq & \big( \| \bsigma_h^S - 2 \mu \bepsilon (\bu_h) - p_h \bI  \|_{\cA,h}^2
  + 2 \mu (C_K^\prime)^2 \| \bepsilon (\bu_h^C - \bu_h) \|_h^2 \\
  & + 2 \mu \delta \| \bepsilon (\bu - \bu_h^C) \|^2 + \frac{(C_K^\prime)^2}{2 \mu \delta} \| \as \: \bsigma_h^S \|^2 \big) \\
  \leq & \big( \| \bsigma_h^S - 2 \mu \bepsilon (\bu_h) - p_h \bI  \|_{\cA,h}^2
  + 2 \mu \left( (C_K^\prime)^2 + 2 \delta \right) \| \bepsilon (\bu_h^C - \bu_h) \|_h^2 \\
  & + 4 \mu \delta \| \bepsilon (\bu - \bu_h) \|_h^2 + \frac{(C_K^\prime)^2}{2 \mu \delta} \| \as \: \bsigma_h^S \|^2 \big)
\end{align*}
for any $\delta > 0$. Since $2 \mu \| \bepsilon (\bu - \bu_h) \|_h^2 \leq ||| ( \bu - \bu_h , p - p_h ) |||^2$ from the definition in
(\ref{eq:energy_norm}), this finally leads to
\begin{equation}
  \begin{split}
    (1 - 2 \delta) ||| ( \bu - \bu_h , p - p_h ) |||^2 & \leq \big( \| \bsigma_h^S - 2 \mu \bepsilon (\bu_h) - p_h \bI  \|_{\cA,h}^2 \\
    + 2 \mu & \left( (C_K^\prime)^2 + 2 \delta \right) \| \bepsilon (\bu_h^C - \bu_h) \|_h^2
    + \frac{(C_K^\prime)^2}{2 \mu \delta} \| \as \: \bsigma_h^S \|^2 \big)
  \end{split}
  \label{eq:guaranteed_estimator}
\end{equation}
for any $\delta \in ( 0 , 1/2 )$. The choice of $\delta$ may be optimized in dependence of the relative size of the individual error
estimator terms
\begin{equation}
  \eta_h^R = \| \bsigma_h^S - 2 \mu \bepsilon (\bu_h) - p_h \bI  \|_{\cA,h} \: , \:
  \eta_h^C = \sqrt{2 \mu} \| \bepsilon (\bu_h^C - \bu_h) \|_h \: , \:
  \eta_h^S = \| \as \: \bsigma_h^S \| / \sqrt{2 \mu} \: .
  \label{eq:error_estimator_terms}
\end{equation}
Since these three contributions to the error estimator turn out to be of comparable size in our computations, choosing
$\delta = 1/4$ is sufficient for our purpose leading to the following guaranteed reliability result.

\begin{theorem}
  With the error estimator terms $\eta_h^R$, $\eta_h^C$ and $\eta_h^S$ defined in (\ref{eq:error_estimator_terms}), the
  error $( \bu - \bu_h , p - p_h )$, measured in the energy norm defined by (\ref{eq:energy_norm}), satisfies
  \begin{equation}
    ||| ( \bu - \bu_h , p - p_h ) ||| \leq \left( 2 ( \eta_h^R )^2
    + \left ( 2 (C_K^\prime)^2 + 1 \right) ( \eta_h^C )^2
    + 8 (C_K^\prime)^2 ( \eta_h^S )^2 \right)^{1/2} \: .
    \label{eq:guaranteed_estimator_final}
  \end{equation}
  \label{theorem-guaranteed_reliability}
\end{theorem}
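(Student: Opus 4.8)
The plan is to obtain the assertion by specializing the inequality (\ref{eq:guaranteed_estimator}) --- already derived above for every $\delta \in (0,1/2)$ --- to the particular value $\delta = 1/4$. Recall how (\ref{eq:guaranteed_estimator}) was reached: one expands $\| \bsigma_h^S - 2 \mu \bepsilon (\bu_h) - p_h \bI \|_{\cA,h}^2$ via (\ref{eq:first_estimator_first}) after inserting $\bsigma = 2 \mu \bepsilon (\bu) + p \bI$; rewrites the resulting mixed term through the integration-by-parts identity (\ref{eq:first_estimator_second}), using $\div (\bsigma - \bsigma_h^S) = \bzero$ in $\Omega$ and the vanishing normal trace on $\Gamma_N$, and splitting off the anti-symmetric part; replaces the anti-symmetry contribution by the computable bound (\ref{eq:improved_antisymmetric_final}), available because $\bsigma_h^S$ satisfies the average-symmetry property (\ref{eq:average_symmetry}); and replaces the conforming-correction contribution by (\ref{eq:improved_conforming_estimate}), available because $\bu_h^C$ satisfies the divergence constraint (\ref{eq:divergence_constraint}). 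A final round of Young's inequality with parameter $\delta$, together with $\| \bepsilon (\bu - \bu_h^C) \|_h \leq \| \bepsilon (\bu - \bu_h) \|_h + \| \bepsilon (\bu_h - \bu_h^C) \|_h$ and $2 \mu \| \bepsilon (\bu - \bu_h) \|_h^2 \leq ||| ( \bu - \bu_h , p - p_h ) |||^2$, moves a multiple $2\delta$ of the energy norm to the left and yields (\ref{eq:guaranteed_estimator}).

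It then only remains to fix $\delta$. In principle $\delta$ could be tuned in terms of the relative sizes of $\eta_h^R$, $\eta_h^C$, $\eta_h^S$, but since these three quantities are of comparable magnitude in the computations, the balanced choice $\delta = 1/4$ suffices. Substituting $\delta = 1/4$ into (\ref{eq:guaranteed_estimator}) gives $1 - 2\delta = 1/2$ on the left, the weight $2\mu\big((C_K^\prime)^2 + 2\delta\big) = 2\mu\big((C_K^\prime)^2 + \tfrac12\big)$ in front of $\| \bepsilon (\bu_h^C - \bu_h) \|_h^2$, and the weight $(C_K^\prime)^2/(2\mu\delta) = 2(C_K^\prime)^2/\mu$ in front of $\| \as \: \bsigma_h^S \|^2$. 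Multiplying through by $2$ and reading off, from the definitions (\ref{eq:error_estimator_terms}), that $\| \bsigma_h^S - 2 \mu \bepsilon (\bu_h) - p_h \bI \|_{\cA,h}^2 = (\eta_h^R)^2$, $2\mu\| \bepsilon (\bu_h^C - \bu_h) \|_h^2 = (\eta_h^C)^2$ and $\| \as \: \bsigma_h^S \|^2/(2\mu) = (\eta_h^S)^2$, one obtains $||| ( \bu - \bu_h , p - p_h ) |||^2 \leq 2(\eta_h^R)^2 + \big(2(C_K^\prime)^2 + 1\big)(\eta_h^C)^2 + 8(C_K^\prime)^2(\eta_h^S)^2$, and taking square roots gives (\ref{eq:guaranteed_estimator_final}).

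I do not expect a genuine obstacle, because all of the analytical substance --- the structural identity (\ref{eq:first_estimator_second}), the two improved estimates, and the replacement of the unknown constants $C_A$, $C_K$ by the shape-dependent, fully computable $C_K^\prime$ (which are legitimate by virtue of (\ref{eq:average_symmetry}) and (\ref{eq:divergence_constraint})) --- is already established. The only delicate point is the bookkeeping in the chain of Cauchy--Schwarz and Young inequalities leading to (\ref{eq:guaranteed_estimator}): one must distribute the weights so that, after using $(a+b)^2 \leq 2a^2 + 2b^2$ on $\| \bepsilon (\bu - \bu_h^C) \|_h$, the coefficient of $||| ( \bu - \bu_h , p - p_h ) |||^2$ on the right stays strictly below $1$ as long as $\delta < 1/2$. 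With $\delta = 1/4$ this margin is exactly $1/2$, and the stated constants $2$, $2(C_K^\prime)^2 + 1$, $8(C_K^\prime)^2$ then follow immediately.
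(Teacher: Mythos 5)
Your proposal is correct and follows exactly the paper's own route: the theorem is obtained by substituting $\delta = 1/4$ into the already-established bound (\ref{eq:guaranteed_estimator}), multiplying by $2$, and rewriting the three terms via the definitions in (\ref{eq:error_estimator_terms}), and your bookkeeping of the constants $2$, $2(C_K^\prime)^2+1$ and $8(C_K^\prime)^2$ is accurate.
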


\section{Construction of stresses with element-wise symmetry on average}

\label{sec-average_symmetry}

This section provides a possible way for the construction of a modified stress reconstruction with the property
$( \as \: \bsigma_h^S , \bJ )_{L^2 (T)} = 0$ for all $T \in \cT_h$. To this end, we go back to the original stress reconstruction
procedure at the end of section \ref{sec-linear_elasticity_stress}. In order to keep the equilibration property
$\div \: \bsigma_h^R + \cP_h \bff = \bzero$ unaffected, we compute a correction in
\begin{equation}
  \begin{split}
    \bSigma_h^{R,\perp} & = \{ \bsigma_h \in \bSigma_h^{R,0} + \bSigma_h^{R,1} : \div \: \bsigma_h = \bzero \} \\
    & = \{ \bnabla^\perp \bchi_h : \bchi_h \in H_{\Gamma_N}^1 (\Omega)^2 \: , \: \left. \bchi_h \right|_T \in P_2 (T)^2 \}
    =: \bnabla^\perp \bXi_h \: ,
  \end{split}
  \label{eq:div_free_correction}
\end{equation}
{\colb where $\bXi_h$ is the standard conforming piecewise quadratic finite element space with zero boundary conditions on
$\Gamma_N$.} In order to retain the approximation properties of $\bsigma_h^R$, a first attempt would be to compute
$\bsigma_h^{R,\perp} \in \bSigma_h^{R,\perp}$ such that
\begin{equation}
  \begin{split}
  \| \bsigma_h^{R,\perp} \|^2 & \mbox{ is minimized subject to the constraints } \\
  & ( \as \: (\bsigma_h^R + \bsigma_h^{R,\perp}) , \bJ )_{L^2 (T)} = 0 \mbox{ for all } T \in \cT_h \: .
  \end{split}
  \label{eq:overdetermined_system_global}
\end{equation}
Inserting $\bsigma_h^{R,\perp} = \bnabla^\perp \bchi_h$ with $\bchi_h \in \bXi_h$,
(\ref{eq:overdetermined_system_global}) turns out to be equivalent to
\begin{equation}
  \begin{split}
  \| \bnabla^\perp \bchi_h \|^2 & \rightarrow \min ! \mbox{ subject to the constraints } \\
  & ( \div \: \bchi_h , 1 )_{L^2 (T)} = ( \as \: \bsigma_h^R , \bJ )_{L^2 (T)} \mbox{ for all } T \in \cT_h \: .
  \end{split}
  \label{eq:overdetermined_system_representation_global}
\end{equation}
The solution $\bchi_h^\perp \in \bXi_h$ of (\ref{eq:overdetermined_system_representation_global}) is uniquely determined by the
associated KKT conditions
\begin{equation}
  \begin{split}
    ( \bnabla^\perp \bchi_h^\perp , \bnabla^\perp \bxi_h ) - ( \div \: \bxi_h , \nu_h ) & = 0 \\
    - ( \div \: \bchi_h^\perp , \rho ) & = - ( \as \: \bsigma_h^R , \rho_h \bJ )
  \end{split}
  \label{eq:KKT_system_global}
\end{equation}
for all $\bxi_h \in \bXi_h$ and $\rho_h \in Z_h$, where $Z_h$ denotes the space of scalar piecewise constant functions with
respect to $\cT_h$. In (\ref{eq:KKT_system_global}), $\nu_h \in Z_h$ plays the role of a Lagrange multiplier.
{\colb The discrete inf-sup stability of the $P_2-P_0$ combination in two dimensions (cf. \cite[Sect. 8.4.3]{BofBreFor:13}) and
the coercivity in $H_{\Gamma_N}^1 (\Omega)$ leads to the well-posedness of (\ref{eq:KKT_system_global}).}
The modified stress reconstruction is given by $\bsigma_h^S = \bsigma_h^R + \bnabla^\perp \bchi_h$.

{\colb It would be desirable to replace the global minimization problem (\ref{eq:overdetermined_system_representation_global})
by a set of local ones like it will be done later in the next section for the estimation of the non-conformity error. The main
incentive for such an approach would be the control of the correction $\bsigma_h^{R,\perp}$ on an element $T$ by the
right-hand side $( \as \: \bsigma_h^R , \bJ )$ in a neighborhood of $T$. Unfortunately, this is not possible, in general, since the
following situation may occur in principle: On one element $T^\ast$ away from the boundary,
$( \as \: \bsigma_h^R , \bJ )_{L^2 (T^\ast)}$ does not vanish while $( \as \: \bsigma_h^R , \bJ )_{L^2 (T)} = 0$ for all
$T \in \cT_h \backslash \{ T^\ast \}$. Then, it is not possible to find an admissible $\bchi_h$ such that its support
$\mbox{supp} \: \bchi_h$ is contained in a subset $\omega \subset \Omega$ that does not touch the boundary,
$\partial \omega \cap \partial \Omega = \emptyset$. This is due to the fact that, in this case,
\begin{equation}
  \begin{split}
    \sum_{T \in \cT_h} ( \div \: \bchi_h , 1 )_{L^2 (T)} & = ( \div \: \bchi_h , 1 )_{L^2 (\omega)}
    = \langle \bchi_h \cdot \bn , 1 \rangle_{\partial \omega} = 0 \\
    & \neq ( \as \: \bsigma_h^R , \bJ )_{L^2 (T^\ast)} = \sum_{T \in \cT_h} ( \as \: \bsigma_h^R , \bJ )_{L^2 (T)}
  \end{split}
\end{equation}
would hold. From the computational point of view, (\ref{eq:overdetermined_system_representation_global}) constitutes a
saddle point problem which requires much less effort to solve than the original one (\ref{eq:disp_pressure_nonconforming}).
}

The following result gives an upper bound for the correction $\bnabla^\perp \bchi_h$ which will later be used for showing
{\colb global} efficiency of the error estimator.

{\colb
\begin{proposition}
  The correction $\bchi_h^\perp \in \bXi_h$ defined by
  (\ref{eq:overdetermined_system_representation_global}) satisfies
  \begin{equation}
    \| \bnabla^\perp \bchi_h^\perp \| \leq C \| \bsigma - \bsigma_h^R \| \: ,
  \end{equation}
  where the constant $C$ depends only on the shape regularity of $\cT_h$.
  \label{prop-average_symmetry_correction}
\end{proposition}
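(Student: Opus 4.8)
The plan is to exploit two facts: that the exact stress $\bsigma$ is symmetric, and that the discrete divergence on the $P_2$-$P_0$ Stokes pair admits a bounded right inverse. First I would observe that, since $\as \: \bsigma = \bzero$, the data prescribed in the constraint of (\ref{eq:overdetermined_system_representation_global}) is a functional of the stress error:
\[
  c_T := ( \as \: \bsigma_h^R , \bJ )_{L^2 (T)} = ( \as \: ( \bsigma_h^R - \bsigma ) , \bJ )_{L^2 (T)} \: , \quad T \in \cT_h \: .
\]
Using the elementary identity $( \as \: \btau , \bJ )_{L^2 (T)} = \int_T ( \tau_{12} - \tau_{21} )$ together with Cauchy--Schwarz, this yields $| c_T | \leq \sqrt{2} \, | T |^{1/2} \, \| \bsigma - \bsigma_h^R \|_{L^2 (T)}$. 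Since $\bchi_h^\perp$ is, by construction, the minimizer of $\| \bnabla^\perp ( \cdot ) \|$ over all $\bchi_h \in \bXi_h$ satisfying $( \div \: \bchi_h , 1 )_{L^2 (T)} = c_T$ for all $T \in \cT_h$, and since $\| \bnabla^\perp \bchi_h \| = \| \bnabla \bchi_h \|$ (the operation $\bnabla^\perp$ merely permutes and changes signs of the entries of $\bnabla$), it suffices to exhibit one admissible competitor $\widetilde{\bchi}_h \in \bXi_h$ with $\| \bnabla \widetilde{\bchi}_h \| \leq C \| \bsigma - \bsigma_h^R \|$; then $\| \bnabla^\perp \bchi_h^\perp \| \leq \| \bnabla^\perp \widetilde{\bchi}_h \|$ concludes.

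To construct $\widetilde{\bchi}_h$ I would use the discrete inf-sup stability of the $P_2$-$P_0$ pair on $H_{\Gamma_N}^1 (\Omega) \times Z_h$ --- the same ingredient invoked for the well-posedness of (\ref{eq:KKT_system_global}) --- which is available here with an inf-sup constant $\beta > 0$ depending only on the shape-regularity of $\cT_h$, because $\Gamma_D$ has positive length. Equivalently, the linear map sending $\bchi_h \in \bXi_h$ to the piecewise constant function with value $| T |^{-1} ( \div \: \bchi_h , 1 )_{L^2 (T)}$ on each $T \in \cT_h$ has a right inverse bounded by $\beta^{-1}$ in the norms $\| \bnabla ( \cdot ) \|$ and $\| \cdot \|$. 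Applying it to the piecewise constant function $q^\ast$ with $q^\ast|_T = c_T / | T |$ produces $\widetilde{\bchi}_h \in \bXi_h$ that is admissible in (\ref{eq:overdetermined_system_representation_global}) and satisfies $\| \bnabla \widetilde{\bchi}_h \| \leq \beta^{-1} \| q^\ast \|$.

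It then remains to estimate $\| q^\ast \|$, which is immediate from the bound on $c_T$:
\[
  \| q^\ast \|^2 = \sum_{T \in \cT_h} \frac{c_T^2}{| T |} \leq 2 \sum_{T \in \cT_h} \| \bsigma - \bsigma_h^R \|_{L^2 (T)}^2 = 2 \, \| \bsigma - \bsigma_h^R \|^2 \: ,
\]
whence $\| \bnabla^\perp \bchi_h^\perp \| \leq \| \bnabla \widetilde{\bchi}_h \| \leq \sqrt{2} \, \beta^{-1} \, \| \bsigma - \bsigma_h^R \|$, so the claim holds with $C = \sqrt{2} \, \beta^{-1}$. I do not expect a genuine obstacle here; the one step requiring care is the inf-sup construction, where the element volumes $| T |$ have to be tracked so that the competitor is controlled by the \emph{global} $L^2$ norm of $\bsigma - \bsigma_h^R$ rather than by a mesh-dependent quantity, and where, in the degenerate case $\Gamma_N = \emptyset$, one argues in $H^1 (\Omega)$ modulo constants (which leaves $\bnabla^\perp \widetilde{\bchi}_h$ unchanged). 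The conceptual content is simply that the symmetry $\as \: \bsigma = \bzero$ converts the constraint data $( \as \: \bsigma_h^R , \bJ )$ into a piece of the stress error.
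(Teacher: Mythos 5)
Your proof is correct, and it rests on the same essential ingredient as the paper's --- the discrete inf-sup stability of the $P_2$--$P_0$ pair on $\bXi_h \times Z_h$ --- but it deploys that ingredient differently. The paper works with the KKT system (\ref{eq:KKT_system_global}): testing the first equation with $\bchi_h^\perp$ gives $\| \bnabla^\perp \bchi_h^\perp \|^2 = ( \div \, \bchi_h^\perp , \nu_h ) = ( \as \, \bsigma_h^R , \nu_h \bJ )$, the Lagrange multiplier is then bounded by $\| \nu_h \| \lesssim \| \as \, \bsigma_h^R \|$ via the standard saddle-point stability estimate, and only at the very end is $\as \, \bsigma = \bzero$ invoked to replace $\| \as \, \bsigma_h^R \|$ by $\| \as ( \bsigma - \bsigma_h^R ) \| \leq \| \bsigma - \bsigma_h^R \|$. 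You never touch the multiplier: you insert $\as \, \bsigma = \bzero$ already at the level of the constraint data $c_T$, use the inf-sup condition in its equivalent form as a bounded right inverse of the map $\bchi_h \mapsto \Pi_{Z_h} \div \, \bchi_h$ to manufacture an admissible competitor $\widetilde{\bchi}_h$ with $\| \bnabla \widetilde{\bchi}_h \| \leq \beta^{-1} \| q^\ast \| \leq \sqrt{2} \, \beta^{-1} \| \bsigma - \bsigma_h^R \|$, and conclude by minimality of $\bchi_h^\perp$ together with $\| \bnabla^\perp ( \cdot ) \| = \| \bnabla ( \cdot ) \|$. Your bookkeeping of the factors $|T|$ is right, so the competitor is controlled by the global $L^2$ error as claimed, and the resulting constant is of the same nature as the paper's, since the multiplier bound the paper cites is itself a consequence of the same inf-sup constant. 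One shared caveat, not specific to your argument: the discrete inf-sup constant of this pair on $H_{\Gamma_N}^1 (\Omega)^2 \times Z_h$ depends on $\Omega$ and on the splitting of the boundary, not only on the shape regularity of $\cT_h$, so the claim that $C$ ``depends only on the shape regularity'' must be read with that understanding in both proofs.
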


\begin{proof}
  Since $\bchi_h^\perp$ is a solution of the KKT system (\ref{eq:KKT_system_global}), we obtain
  \begin{equation}
    \| \bnabla^\perp \bchi_h^\perp \|^2
    = ( \bnabla^\perp \bchi_h^\perp , \bnabla^\perp \bchi_h^\perp )
    = ( \div \: \bchi_h^\perp , \nu_h ) = ( \as \: \bsigma_h^R , \nu_h \bJ ) \: .
    \label{eq:correction_relation}
  \end{equation}
  From the well-posedness of (\ref{eq:KKT_system_global}) we obtain
  \begin{equation}
    \| \nu_h \| \leq \frac{C^2}{\sqrt{2}} \| \as \: \bsigma_h^R \|
    \label{eq:inf_sup_estimate}
  \end{equation}
  with a constant $C$ (cf. \cite[Theorem 5.2.1]{BofBreFor:13}). A combination of (\ref{eq:correction_relation}) and
  (\ref{eq:inf_sup_estimate}) implies
  \begin{equation}
    \| \bnabla^\perp \bchi_h^\perp \|^2 \leq \| \as \: \bsigma_h^R \| \: \| \nu_h \bJ \|
    \leq \sqrt{2} \| \as \: \bsigma_h^R \| \: \| \nu_h \| \leq C^2 \| \as \: \bsigma_h^R \|^2 \: .
    \label{eq:correction_estimate}
  \end{equation}
  Since $\as \: \bsigma = \bzero$, we obtain
  \begin{equation}
    \| \bnabla^\perp \bchi_h^\perp \| \leq \widetilde{C} \| \as \: (\bsigma - \bsigma_h^R) \|
    \leq \widetilde{C} \| \bsigma - \bsigma_h^R \| \: ,
    \label{eq:correction_estimate_final}
  \end{equation}
  where we used the fact that $| \as \: (\bsigma - \bsigma_h^R) | \leq | \bsigma - \bsigma_h^R |$ holds pointwise.
\end{proof}
}

\section{Distance to divergence-constrained conformity}

\label{sec-divergence_conforming}

Our point of departure for the construction of a divergence-constrained conforming approximation is the minimization problem
\begin{equation}
  \begin{split}
    \| \bnabla \bu_h^C - \bnabla \bu_h \|_h^2 & \rightarrow \min ! \mbox{ subject to the constraints } \\
    & ( \div \: \bu_h^C , 1 )_{L^2 (T)} = ( \div \: \bu_h , 1 )_{L^2 (T)} \mbox{ for all } T \in \cT_h
  \end{split}
  \label{eq:conforming_approximation_global}
\end{equation}
among all $\bu_h^C \in \bV_h^C$, the subspace of conforming piecewise quadratic functions. The
solution of this global minimization problem can be replaced by local ones based on the partition of unity
\begin{equation}
  1 \equiv \sum_{z \in \cV_h^\prime} \phi_z {\colb \mbox{ on } \Omega}
  \label{eq:partition_of_unity_D}
\end{equation}
with respect to $\cV_h^\prime = \{ z \in \cV_h : z \notin \Gamma_D \}$. {\colb Here, $\cV_h$ denotes the set of vertices of the
triangulation and $\phi_z$, $z \in \cV_h^\prime$ are continuous piecewise linear functions with support restricted to
\begin{equation}
  \omega_z := \bigcup \{ T \in \cT_h : z \mbox{ is a vertex of } T \} \: .
  \label{eq:vertex_patch}
\end{equation}
For the partition of unity (\ref{eq:partition_of_unity_D}), the standard pyramid basis functions
need to be extended for all vertices $z \in \cV_h^\prime$ adjacent to a boundary vertex on $\Gamma_D$ such that it is constant
along the connecting edge. This requires that the triangulation $\cT_h$ is such that each vertex on $\Gamma_D$ is connected
by an interior edge.}
From the decomposition
\begin{equation}
  \bu_h = \sum_{z \in \cV_h^\prime} \bu_h \phi_z =: \sum_{z \in \cV_h^\prime} \bu_{h,z}
  \label{eq:sum_nonconforming}
\end{equation}
we are led, for each $z \in \cV_h^\prime$, to the problem
\begin{equation}
  \begin{split}
    \| \bnabla \bu_{h,z}^C - \bnabla \bu_{h,z} \|_h^2 & \rightarrow \min ! \mbox{ subject to the constraints } \\
    & ( \div \: \bu_{h,z}^C , 1 )_{L^2 (T)} = ( \div \: \bu_{h,z} , 1 )_{L^2 (T)} \mbox{ for all } T \subset \omega_z
  \end{split}
  \label{eq:conforming_approximation_local}
\end{equation}
among all $\bu_{h,z}^C \in \widehat{\bV}_{h,z}^C$, where $\widehat{\bV}_{h,z}^C \subset H^1 (\omega_z)$
{\colb may be any space of conforming finite elements vanishing on all edges not adjacent to $z$. The compatibility condition
for the constraint in (\ref{eq:conforming_approximation_local}) is satisfied since $\bu_{h,z}$ and $\bu_{h,z}^C$ both vanish on
$\omega_z$.} Since $\bu_{h,z} = \bu_h \Phi_z$ is piecewise cubic, using conforming elements of polynomial
degree 3 are used for $\widehat{\bV}_{h,z}^C$ in order to {\colb secure the optimal approximation order}.
For each $z \in \cV_h^\prime$, the
solution $\bu_{h,z}^C \in \widehat{\bV}_{h,z}^C$ of the minimization problem (\ref{eq:conforming_approximation_local}) is
obtained from the KKT system
\begin{equation}
  \begin{split}
  ( \bnabla \bu_{h,z}^C , \bnabla \bv_{h,z}^C )_h - ( \div \: \bv_{h,z}^C , \nu_{h,z} ) & = ( \bnabla \bu_{h,z} , \bnabla \bv_{h,z}^C )_h
  \\
  - ( \div \: \bu_{h,z}^C , \rho_{h,z} ) & = - ( \div \: \bu_{h,z} , \rho_{h,z} )_h
  \end{split}
  \label{eq:KKT_system_conforming}
\end{equation}
for all $\bv_{h,z}^C \in \widehat{\bV}_{h,z}^C$ and $\rho_{h,z} \in Z_{h,z}$. These local saddle-point problems for
$\bu_{h,z}^C \in \widehat{\bV}_{h,z}^C$ and $\nu_{h,z} \in Z_{h,z}$ are again well-posed
{\colb due to the inf-sup stability of these combinations of finite element spaces.}
For the conforming approximation
\begin{equation}
  \bu_h^C = \sum_{z \in \cV_h^{\prime\prime}} \bu_{h,z}^C \in \widehat{\bV}_h^C \: ,
  \label{eq:sum_conforming}
\end{equation}
$\widehat{\bV}_h^C \subset H_{\Gamma_D}^1 (\Omega)$ being the piecewise cubic finite element space, one obtains,
for each $T \in \cT_h$,
\[
  ( \div \: \bu_h^C , 1 )_{L^2 (T)} = \sum_{z \in \cV_h^\prime} ( \div \: \bu_{h,z}^C , 1 )_{L^2 (T)}
  = \sum_{z \in \cV_h^\prime} ( \div \: \bu_{h,z} , 1 )_{L^2 (T)} = ( \div \: \bu_h , 1 )_{L^2 (T)} \: ,
\]
i.e., it satisfies the constraint in (\ref{eq:conforming_approximation_global}).

\begin{proposition}
  The conforming approximation $\bu_h^C \in \widehat{\bV}_h^C$ defined by (\ref{eq:KKT_system_conforming}) and
  (\ref{eq:sum_conforming}) satisfies
  \begin{equation}
    \| \bnabla \bu_h^C - \bnabla \bu_h \|_{L^2 (T)} \leq C \| \bnabla \bu - \bnabla \bu_h \|_{L^2 (\omega_T)} \: ,
  \end{equation}
  where the constant $C$ depends only on the shape regularity of $\cT_h$.
  \label{prop-distance_to_conforming}
\end{proposition}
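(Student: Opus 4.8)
The plan is to establish the local stability estimate for each vertex patch problem \eqref{eq:conforming_approximation_local} and then sum the contributions using the finite overlap of the patches $\omega_z$. The key observation is that the decomposition \eqref{eq:sum_nonconforming} localizes the nonconformity, so it suffices to bound each local correction $\bnabla \bu_{h,z}^C - \bnabla \bu_{h,z}$ by the nonconformity of $\bu_h$ measured over the patch $\omega_z$.

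First I would analyze the local saddle-point problem \eqref{eq:KKT_system_conforming}. Because the constraint there couples $\bu_{h,z}^C$ to $\bu_{h,z}$ only through the element-wise averages of the divergence, the admissible set is nonempty (as noted in the excerpt, the compatibility condition holds since both fields vanish on $\partial \omega_z$). A natural competitor is any conforming $\bu_{h,z}^{C,\ast} \in \widehat{\bV}_{h,z}^C$ with $(\div \bu_{h,z}^{C,\ast},1)_{L^2(T)} = (\div \bu_{h,z},1)_{L^2(T)}$ on each $T \subset \omega_z$; one can build such a field using a standard conforming interpolant (e.g.\ averaging-type or Oswald-type quasi-interpolation on the patch) of $\bu_{h,z}$, corrected by a locally-supported bubble to fix the divergence averages, and the inf-sup stability of the finite element pair $\widehat{\bV}_{h,z}^C \times Z_{h,z}$ (already invoked for well-posedness) bounds the size of that bubble. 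Since $\bu_{h,z}^C$ is the minimizer, $\| \bnabla(\bu_{h,z}^C - \bu_{h,z}) \|_h \leq \| \bnabla(\bu_{h,z}^{C,\ast} - \bu_{h,z}) \|_h$, and the right-hand side is controlled, via scaling arguments on the reference patch and the approximation properties of the quasi-interpolant, by $\| \bnabla \bu - \bnabla \bu_h \|_{L^2(\omega_z)}$ together with the interpolation error of $\bu$ itself — which, since $\bu \in H^1_{\Gamma_D}(\Omega)^2$ is the exact solution, is absorbed into the same term after subtracting a rigid body mode on the patch and applying a Poincaré/Korn estimate. All constants here depend only on shape regularity.

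Next I would pass from patches to elements. For a fixed $T \in \cT_h$, only the three vertices $z$ of $T$ (those lying in $\cV_h^\prime$) contribute to $\bu_h^C - \bu_h$ on $T$, since $\phi_z$ vanishes on $T$ for every other vertex. Hence $\| \bnabla(\bu_h^C - \bu_h) \|_{L^2(T)} \leq \sum_{z \in T \cap \cV_h^\prime} \| \bnabla(\bu_{h,z}^C - \bu_{h,z}) \|_{L^2(T)}$, and applying the local bound from the previous paragraph to each summand gives the estimate with $\omega_T := \bigcup \{ \omega_z : z \text{ a vertex of } T \}$, the union of patches meeting $T$. The bounded overlap of these patches (again a consequence of shape regularity) keeps the constant under control.

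The main obstacle will be the local construction and estimation in the first step: one must produce a conforming competitor on $\omega_z$ satisfying the element-wise divergence constraints while being $O(1)$-close to $\bu_{h,z}$ in the broken $H^1$ seminorm, and the presence of the divergence constraint means a plain nodal or averaging interpolant does not suffice — the divergence-correcting bubble must be controlled uniformly, which is exactly where the inf-sup stability of the cubic/$P_0$ pair on the patch and a careful scaling argument enter. A secondary technical point is handling the patches $\omega_z$ that touch $\Gamma_D$, where the extended partition of unity \eqref{eq:partition_of_unity_D} and the boundary conditions in $\widehat{\bV}_{h,z}^C$ must be accounted for; but since both $\bu_{h,z}$ and the competitor vanish on the relevant boundary edges, the same argument goes through with the Poincaré/Korn constant now benefiting from the Dirichlet portion.
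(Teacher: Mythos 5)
Your overall architecture (solve the local patch problems, use minimality of $\bu_{h,z}^C$ against a competitor, then sum over the at most three patches meeting each $T$ using bounded overlap) matches the paper's, and the patch-to-element assembly step is fine. The gap is in the local estimate. Everything in your construction --- the Oswald-type interpolant of $\bu_{h,z}$ and the divergence-correcting bubble --- is controlled, after scaling, by the scaled jump norms $\sum_{E \subset \omega_z} h_E^{-1} \| \llbracket \bu_{h,z} \rrbracket_E \|_{L^2(E)}^2$; this is unavoidable, since the distance of a discontinuous piecewise polynomial to the conforming space is equivalent to these jump norms. The real content of the proposition is therefore the passage from these jump terms to $\| \bnabla \bu - \bnabla \bu_h \|_{L^2(\omega_T)}$, and that step is \emph{false} for a general discontinuous $\bu_h$: if $\bu_h$ equals a continuous function plus a constant on a single element, the scaled jump norm is positive while the broken gradient distance to $H^1$ is zero. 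What rescues it here is the defining property of the Fortin--Soulie space, $\langle \llbracket \bu_h \rrbracket_E , z \rangle_{L^2(E)} = 0$ for all $z \in P_1(E)$, which is exactly the hypothesis of the bound $\sum_{E \subset \omega_T} h_E^{-1} \| \llbracket \bu_h \rrbracket_E \|_{L^2(E)}^2 \lesssim \| \bnabla \bu - \bnabla \bu_h \|_{L^2(\omega_T)}^2$ of Achdou--Bernardi--Coquel type that the paper invokes. Your proposal never uses this orthogonality; the phrase about the interpolation error of $\bu$ being ``absorbed after subtracting a rigid body mode on the patch and applying a Poincar\'e/Korn estimate'' cannot substitute for it, because subtracting a rigid body mode (or any smooth field) leaves the jumps unchanged, and a broken Poincar\'e inequality for piecewise $H^1$ functions itself requires control of the jump means.

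Two smaller points. First, the paper obtains the local bound $\| \bnabla \bu_{h,z}^C - \bnabla \bu_{h,z} \|_{L^2(\omega_z)}^2 \leq \tilde C \sum_{E \subset \omega_z} h_E^{-1} \| \llbracket \bu_{h,z} \rrbracket_E \|_{L^2(E)}^2$ without constructing any competitor: the map $\bu_{h,z} \mapsto \bu_{h,z}^C$ is linear on a finite-dimensional space, the left side vanishes whenever the right side does (then $\bu_{h,z}$ is itself conforming and admissible), and a scaling argument on the reference patch gives the $h$-powers; this sidesteps the inf-sup-controlled bubble construction you propose, which would work but needs the same jump bound anyway to close. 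Second, you still need the elementary observation $\| \llbracket \bu_h \rrbracket_E \phi_z \|_{L^2(E)} \leq \| \llbracket \bu_h \rrbracket_E \|_{L^2(E)}$ to convert jumps of the localized functions $\bu_{h,z}$ into jumps of $\bu_h$ before applying the orthogonality-based bound.
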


\begin{proof}
  Since $\bu_{h,z}^C \in \widehat{\bV}_{h,z}^C$ solves (\ref{eq:conforming_approximation_local}), a simple scaling argument
  gives us
  \begin{equation}
    \| \bnabla \bu_{h,z}^C - \bnabla \bu_{h,z} \|_{L^2 (\omega_z)}^2 \leq \tilde{C}
    \sum_{E \subset \omega_z} h_E^{-1} \| \llbracket \bu_{h,z} \rrbracket_E \|_{L^2 (E)}^2
    \label{eq:vertex_patch_equivalence}
  \end{equation}
  for all $z \in \cV_h^\prime$ (note that the right-hand side being zero implies $\bu_{h,z} \in \widehat{\bV}_{h,z}^C$
  and therefore the left-hand side also vanishes). From (\ref{eq:sum_nonconforming}) and (\ref{eq:sum_conforming}) we get
  \begin{equation}
    \begin{split}
      \| \bnabla & \bu_h^C - \bnabla \bu_h \|_{L^2 (T)}^2 \\
      & \leq \sum_{z \in \cV_h^\prime \cap T} \| \bnabla \bu_{h,z}^C - \bnabla \bu_{h,z} \|_{L^2 (T)}^2
      \leq \sum_{z \in \cV_h^\prime \cap T} \| \bnabla \bu_{h,z}^C - \bnabla \bu_{h,z} \|_{L^2 (\omega_z)}^2 \\
      & \leq \tilde{C} \sum_{z \in \cV_h^\prime \cap T} \sum_{E \subset \omega_z} h_E^{-1}
      \| \llbracket \bu_{h,z} \rrbracket_E \|_{L^2 (E)}^2 \\
      & = \tilde{C} \sum_{z \in \cV_h^\prime \cap T} \sum_{E \subset \omega_z} h_E^{-1}
      \| \llbracket \bu_h \rrbracket_E \phi_z \|_{L^2 (E)}^2 \\
      & \leq \tilde{C} \sum_{z \in \cV_h^\prime \cap T} \sum_{E \subset \omega_z} h_E^{-1}
      \| \llbracket \bu_h \rrbracket_E \|_{L^2 (E)}^2
      \leq 2 \tilde{C} \sum_{E \subset \omega_T} h_E^{-1} \| \llbracket \bu_h \rrbracket_E \|_{L^2 (E)}^2 \: .
    \end{split}
    \label{eq:gradient_to_jump}
  \end{equation}
  Since $\langle \llbracket \bu_h \rrbracket_E , 1 \rangle_E {\colb = 0}$ is satisfied for all $E \in \cE_h$, the same line of reasoning
  as in \cite[Theorem 10]{AchBerCoq:03} (cf. \cite[Sect. 6]{HanSteVoh:12}) implies that, for all $T \in \cT_h$,
  \begin{equation}
    \sum_{E \subset \omega_T} h_E^{-1} \| \llbracket \bu_h \rrbracket_E \|_{L^2 (E)}^2
    \leq \widehat{C} \| \bnabla \bu - \bnabla \bu_h \|_{L^2 (\omega_T)}
    \label{eq:jump_to_gradient}
  \end{equation}
  holds. Combining (\ref{eq:gradient_to_jump}) and (\ref{eq:jump_to_gradient}) finishes the proof.
\end{proof}

\section{Global Efficiency of the Improved Estimator}

\label{sec-efficiency}

Efficiency of the error estimator is shown if all three terms in (\ref{eq:error_estimator_terms}), $\eta_h^R$, $\eta_h^C$ and
$\eta_h^S$, can be bounded by the energy norm of the error multiplied with a constant which remains bounded in the
incompressible limit.

Using the definition of $\cA$ in (\ref{eq:strain_stress_relation}) {\colb and (\ref{eq:norm_A_equivalence}), Proposition
\ref{prop-average_symmetry_correction} implies
\begin{equation}
  \sqrt{2 \mu} \| \bsigma_h^S - \bsigma_h^R \|_{\cA,h} \leq \| \bsigma_h^S - \bsigma_h^R \|
  \leq C \| \bsigma - \bsigma_h^R \| \leq \sqrt{2 \mu} C_A C \| \bsigma - \bsigma_h^R \|_{\cA,h} \: .
\end{equation}
}
The first estimator term $\eta_h^R$ can therefore be bounded in the form
{\colb
\begin{equation}
  \begin{split}
    \eta_h^R & = \| \bsigma_h^S - 2 \mu \bepsilon (\bu_h) - p_h \bI  \|_{\cA,h} \\
    & \leq \| \bsigma_h^R - 2 \mu \bepsilon (\bu_h) - p_h \bI  \|_{\cA,h} + \| \bsigma_h^S - \bsigma_h^R \|_{\cA,h} \\
    & \leq \| \bsigma_h^R - 2 \mu \bepsilon (\bu_h) - p_h \bI  \|_{\cA,h} + C_A C \| \bsigma - \bsigma_h^R \|_{\cA,h} \\
    & \leq (1 + C_A C) \| \bsigma_h^R - 2 \mu \bepsilon (\bu_h) - p_h \bI \|_{\cA,h}
    + C_A C \| \bsigma - 2 \mu \bepsilon (\bu_h) - p_h \bI \|_{\cA,h} \\
    & = (1 + C_A C) \| \bsigma_h^R - 2 \mu \bepsilon (\bu_h) - p_h \bI \|_{\cA,h} + C_A C ||| (\bu - \bu_h , p - p_h) ||| \: ,
  \end{split}
  \label{eq:local_efficiency_first}
\end{equation}
where (\ref{eq:energy_norm_A}) was used in the last equality.}

The first term on the right-hand side in (\ref{eq:local_efficiency_first}) can be treated by the following lemma.
We will use the notation $a \lesssim b$ to indicate that $a$ is bounded
by $b$ times a constant that is independent of the Lam\'e parameter $\lambda$.

{\colb
\begin{lemma}
  The stress reconstruction computed by the algorithm at the end of Section \ref{sec-linear_elasticity_stress} satisfies
  \begin{equation}
    \| \bsigma_h^R - 2 \mu \bepsilon (\bu_h) - p_h \bI \|_{A,h}
    \lesssim ||| ( \bu - \bu_h , p - p_h ) |||
    + \sum_{T \in \cT_h} h_T^2 \| \bnabla \bff \|_{L^2 (T)} \: .
  \end{equation}
\end{lemma}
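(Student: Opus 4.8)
The plan is to bound the local stress discrepancy $\|\bsigma_h^R - 2\mu\bepsilon(\bu_h) - p_h\bI\|_{\cA,h}$ element by element, splitting the reconstruction into its three components $\bsigma_h^R = \bsigma_h^{R,0} + \bsigma_h^{R,1} + \bsigma_h^{R,\Delta}$ from the decomposition in Section~\ref{sec-linear_elasticity_stress}. On each $T$, the difference $\bsigma_h^R - \bsigma_h(\bu_h,p_h)$ is a polynomial of degree $2$ (the $RT_1$ part has linear entries, the $\bSigma_h^{R,\Delta}$ part has quadratic divergence), so I would first argue by a scaling/equivalence-of-norms argument on the reference element that its $\cA,T$-norm is controlled by its values in the degrees of freedom that define $\bsigma_h^R$: the edge moments $\langle(\bsigma_h^R-\bsigma_h)\cdot\bn, q_E\rangle_E$ for $q_E\in P_1(E)$ and the interior moments $(\div\,\bsigma_h^R + \cP_h\bff, \widehat{\bq}_h)_{L^2(T)}$ for mean-free linear $\widehat{\bq}_h$. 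By construction (equations (\ref{eq:surface_degrees})--(\ref{eq:interior_degrees})), the edge moments equal the jump terms $\tfrac12\langle\llbracket\bsigma_h\cdot\bn\rrbracket_E, q_E\rangle_E$ on interior edges and the traction residual $\langle(\cP_h^{\Gamma_N}\bg - \bsigma_h\cdot\bn), q_E\rangle_E$ on $\Gamma_N$, while the interior moments vanish identically.

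Second, I would estimate those jump and residual moments by the energy error. The normal jump $\llbracket\bsigma_h\cdot\bn\rrbracket_E$ and the traction residual $\cP_h^{\Gamma_N}\bg - \bsigma_h\cdot\bn$ are the classical residual a~posteriori indicators for the mixed formulation (\ref{eq:disp_pressure_nonconforming}); here the standard route is to insert edge bubble functions, integrate by parts over the two elements sharing $E$, and use the equilibrium equation satisfied by the exact $\bsigma$ together with $\bsigma = 2\mu\bepsilon(\bu)+p\bI$. This produces a bound of the form $h_E^{1/2}\|\llbracket\bsigma_h\cdot\bn\rrbracket_E\|_{L^2(E)} \lesssim \|\bsigma - \bsigma_h\|_{L^2(\omega_E)} + (\text{data oscillation on }\omega_E)$, and the data oscillation on the interior is $h_T\|\bff - \cP_h\bff\|_{L^2(T)} \lesssim h_T^2\|\bnabla\bff\|_{L^2(T)}$ by a Poincaré estimate on $T$, which is where the $\sum_T h_T^2\|\bnabla\bff\|_{L^2(T)}$ term in the statement comes from. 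Summing over elements and invoking the norm equivalence (\ref{eq:norm_A_equivalence}) (recalling $\mu$ is of order one, so the $\cA,h$-norm and the $L^2$-norm of $\bdev$ are equivalent up to $\lambda$-independent constants) converts $\|\bsigma - \bsigma_h\|$ into $|||(\bu-\bu_h, p-p_h)|||$ via (\ref{eq:energy_norm_A}).

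The step I expect to be the main obstacle is making the bubble-function/integration-by-parts argument genuinely $\lambda$-robust: the naive residual bound produces $\|\bsigma - \bsigma_h\|$ in the full $L^2$ norm, but the energy norm only controls $\|\bdev(\bsigma-\bsigma_h)\|$ plus $\lambda^{-1/2}\|\operatorname{tr}(\bsigma-\bsigma_h)\|$, so the volumetric part of $\bsigma-\bsigma_h$ must be handled separately. The key observation rescuing robustness is that the reconstruction inherits the equilibration property (\ref{eq:average_equilibration}), i.e.\ $\div\,\bsigma_h^R + \cP_h\bff = \bzero$, and that $\operatorname{tr}\bsigma_h = 2(\mu+\lambda)\div\bu_h$ with $\div\bu_h = p_h/\lambda$ matches $\div\bu = p/\lambda$ at the continuous level; thus the trace part of $\bsigma_h^R - \bsigma_h$ is itself controlled by the pressure error $\lambda^{-1}\|p-p_h\|$ and the divergence residual, both of which sit inside the energy norm with $\lambda$-independent weight. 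Carefully tracking the powers of $\mu$ and $(\mu+\lambda)$ through the local scaling estimates — so that no constant degenerates as $\lambda\to\infty$ — is the delicate bookkeeping; the rest is the routine Verfürth-style residual machinery applied to the $P_2$-nonconforming / $P_1$-discontinuous pair.
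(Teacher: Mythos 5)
Your proposal is essentially the paper's proof: a local scaling/norm-equivalence argument for $RT_1$-type functions reduces $\| \bsigma_h^R - 2 \mu \bepsilon (\bu_h) - p_h \bI \|_{L^2 (T)}$ to the edge jumps of $( 2 \mu \bepsilon (\bu_h) + p_h \bI ) \cdot \bn$ plus the element divergence misfit $\cP_h^0 \bff - \cP_h \bff$ (note this interior contribution does \emph{not} vanish, since $\div \: \bsigma_h = - \cP_h^0 \bff$ while $\div \: \bsigma_h^R = - \cP_h \bff$; it is exactly the source of the $h_T^2 \| \bnabla \bff \|_{L^2 (T)}$ term), after which the jumps are bounded by the Achdou--Bernardi--Coquel bubble-function argument just as you describe. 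The one caveat concerns the $\lambda$-robustness you flag as the main obstacle: the paper does not perform the separate treatment of the trace part that you sketch --- its final display bounds the full $L^2$ norm by $\| \bepsilon (\bu - \bu_h) \|_h^2 + \| p - p_h \|^2$ with the pressure error unweighted --- so your concern is well placed, but your proposed resolution via $\tr \: \bsigma_h = 2 (\mu + \lambda) \div \: \bu_h$ is not part of (and would be an addition to) the paper's argument.
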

}

\begin{proof}
  The first step consists in the observation that for all functions $\btau_h$ which are element-wise of next-to-lowest order
  Raviart-Thomas type,
  \begin{equation}
    \begin{split}
      \| \btau_h \|_{L^2 (T)}^2 & \lesssim \sum_{E \subset \partial T} h_E
      \left( \| \{ \! \{ \btau_h \cdot \bn \} \! \}_E \|_{L^2 (E)}^2 + \| \llbracket \btau_h \cdot \bn \rrbracket_E \|_{L^2 (E)}^2 \right) \\
      & + h_T^2 \| \div \: \btau_h \|_{L^2 (T)}^2
    \end{split}
    \label{eq:norm_equivalence_patch}
  \end{equation}
  holds. This can be shown in the usual way using a scaling argument and the finite dimension of the considered space.
  Applying (\ref{eq:norm_equivalence_patch}) to $\btau_h = \bsigma_h^R - 2 \mu \bepsilon (\bu_h) - p_h \bI$, the right-hand
  side simplifies since, due to (\ref{eq:surface_degrees}) and (\ref{eq:interior_degrees}),
  \begin{equation}
    \begin{split}
      \{ \! \{ ( \bsigma_h^R - 2 \mu \bepsilon (\bu_h) - p_h \bI ) \cdot \bn \} \! \}_E & = \bzero \hspace{1.4cm}
      \mbox{ for all } E \in \cE_h \: , \\
      \left. \div ( \bsigma_h^R - 2 \mu \bepsilon (\bu_h) - p_h \bI ) \right|_T & = \cP_h^0 \bff - \cP_h \bff \mbox{ for all } T \in \cT_h
    \end{split}
  \end{equation}
  holds, where $\cP_h^0$ denotes the $L^2 (\Omega)$-orthogonal projection onto piecewise constant functions.
  {\colb The identity $\div (2 \mu \bepsilon (\bu_h) + p_h \bI) = - \cP_h^0 \bff$ follows from \cite[Thm. 1]{ForSou:83}.}
  From (\ref{eq:norm_equivalence_patch}) we therefore get
  \begin{equation}
    \begin{split}
      \| \bsigma_h^R & - 2 \mu \bepsilon (\bu_h) - p_h \bI \|_{L^2 (T)}^2 \\
      & \lesssim \sum_{E \subset \partial T} h_E
      \| \llbracket ( \bsigma_h^R - 2 \mu \bepsilon (\bu_h) - p_h \bI ) \cdot \bn \rrbracket_E \|_{L^2 (E)}^2
      + h_T^2 \| \cP_h^0 \bff - \cP_h \bff \|_{L^2 (T)}^2 \\
      & \lesssim \sum_{E \subset \partial T} h_E
      \| \llbracket ( 2 \mu \bepsilon (\bu_h) + p_h \bI ) \cdot \bn \rrbracket_E \|_{L^2 (E)}^2
      + h_T^4 \| {\colb \nabla} \bff \|_{L^2 (T)}^2 \: ,
    \end{split}
    \label{eq:norm_equivalence_patch_applied}
  \end{equation}
  {\colb where the approximation estimate
  \begin{equation}
    \| \cP_h^0 \bff - \cP_h \bff \|_{L^2 (T)}^2 = \| \bff - \cP_h^0 \bff \|_{L^2 (T)}^2 - \| \bff - \cP_h \bff \|_{L^2 (T)}^2
    \lesssim h_T^2 \| \bnabla \bff \|_{L^2 (T)}^2
  \end{equation}
  for the $L^2 (T)$-orthogonal projections $\cP_h^0$ and $\cP_h$ onto polynomials of degree 0 and 1 was used.}
  Arguing along the same lines as in \cite[Theorem 6]{AchBerCoq:03} one gets
  \begin{equation}
    \begin{split}
      h_E \| \llbracket ( 2 \mu & \bepsilon (\bu_h) + p_h \bI ) \cdot \bn \rrbracket_E \|_{L^2 (E)}^2 \\
      & \lesssim \| \bsigma - 2 \mu \bepsilon (\bu_h) - p_h \bI \|_{L^2 (\omega_E)}^2
      {\colb + h_T^2 \| \div (\bsigma - 2 \mu \bepsilon (\bu_h) - p_h \bI) \|_{L^2 (\omega_E)}^2} \\
      & \lesssim \| \bepsilon (\bu - \bu_h) \|_{L^2 (\omega_E)}^2 + \| p - p_h \|_{L^2 (\omega_E)}^2
      {\colb + h_T^2 \| \bff - \cP_h^0 \bff \|_{L^2 (\omega_E)}^2} \: ,
    \end{split}
  \end{equation}
  where $\omega_E$ denotes the union of the two elements adjacent to $E$.
  Summing over all $T$
  {\colb leads to
  \begin{equation}
    \| \bsigma_h^R - 2 \mu \bepsilon (\bu_h) - p_h \bI \|^2
    \lesssim \| \bepsilon (\bu - \bu_h) \|_h^2 + \| p - p_h \|^2 + \sum_{T \in \cT_h} h_T^4 \| \bnabla \bff \|_{L^2 (T)}^2
  \end{equation}
  which} finishes the proof.
\end{proof}

For the second term in (\ref{eq:error_estimator_terms}) we may use Proposition \ref{prop-distance_to_conforming} to get
\begin{equation}
  \begin{split}
    \eta_h^C & \lesssim \| \bepsilon (\bu_h^C - \bu_h) \| \leq \| \nabla (\bu_h^C - \bu_h) \| \\
    & \lesssim \| \nabla (\bu - \bu_h) \| \lesssim \| \bepsilon (\bu - \bu_h) \| \lesssim ||| ( \bu - \bu_h , p - p_h ) ||| \: .
  \end{split}
  \label{eq:local_efficiency_second}
\end{equation}
Finally, the third term in (\ref{eq:error_estimator_terms}) satisfies
\begin{equation}
  \begin{split}
    \eta_h^S = \frac{1}{\sqrt{2 \mu}} \| \as \: \bsigma_h^S \|
    = \| \as \: \bsigma_h^S \|_{\cA,h} & = \| \as \: ( \bsigma_h^S - 2 \mu \bepsilon (\bu_h) - p_h \bI ) \|_{\cA,h} \\
    & \leq \| \bsigma_h^S - 2 \mu \bepsilon (\bu_h) - p_h \bI \|_{\cA,h} = \eta_h^R \: .
  \end{split}
  \label{eq:local_efficiency_third}
\end{equation}

We summarize the {\colb global} efficiency result in the following theorem.

\begin{theorem}
  The error estimator terms $\eta_h^R$, $\eta_h^C$ and $\eta_h^S$ defined in (\ref{eq:error_estimator_terms}) satisfy
  \begin{equation}
    \eta_h^R + \eta_h^C + \eta_h^S \lesssim ||| ( \bu - \bu_h , p - p_h ) |||
    + \sum_{T \in \cT_h} h_T^2 \| {\colb \bnabla} \bff \|_{L^2 (T)} \: .
     \label{eq:local_efficiency}
  \end{equation}
  \label{theorem-efficiency}
\end{theorem}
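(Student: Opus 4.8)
The plan is to simply assemble the three term-wise estimates already prepared in this section. For the reconstruction term $\eta_h^R$ I would combine the triangle-inequality splitting (\ref{eq:local_efficiency_first}),
\[
  \eta_h^R \leq (1 + C_A C) \| \bsigma_h^R - 2 \mu \bepsilon (\bu_h) - p_h \bI \|_{\cA,h}
  + C_A C \, ||| ( \bu - \bu_h , p - p_h ) ||| \: ,
\]
with the preceding Lemma, which bounds the first summand by $||| ( \bu - \bu_h , p - p_h ) ||| + \sum_{T \in \cT_h} h_T^2 \| \bnabla \bff \|_{L^2 (T)}$. The point to check here is that $C_A$ from the dev-div inequality (\ref{eq:dev-div-inequality}) depends only on the geometry of $\Omega$, $\Gamma_D$, $\Gamma_N$, and that the constant $C$ of Proposition \ref{prop-average_symmetry_correction} depends only on shape regularity; neither involves $\lambda$, so $C_A C$ is absorbed into $\lesssim$ and one gets $\eta_h^R \lesssim ||| ( \bu - \bu_h , p - p_h ) ||| + \sum_{T \in \cT_h} h_T^2 \| \bnabla \bff \|_{L^2 (T)}$.

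For the nonconformity term I would invoke (\ref{eq:local_efficiency_second}), which chains Proposition \ref{prop-distance_to_conforming}, the pointwise bound $\| \bepsilon ( \cdot ) \| \leq \| \bnabla ( \cdot ) \|$, the discrete Korn inequality (\ref{eq:discrete_Korn}) applied to $\bu - \bu_h \in H_{\Gamma_D}^1 (\Omega)^2 + \bV_h$, and $2 \mu \| \bepsilon ( \bu - \bu_h ) \|_h^2 \leq ||| ( \bu - \bu_h , p - p_h ) |||^2$, to conclude $\eta_h^C \lesssim ||| ( \bu - \bu_h , p - p_h ) |||$. For the antisymmetry term I would use (\ref{eq:local_efficiency_third}): since $\as ( 2 \mu \bepsilon ( \bu_h ) + p_h \bI ) = \bzero$, one has $\eta_h^S = \| \as \: \bsigma_h^S \|_{\cA,h} = \| \as ( \bsigma_h^S - 2 \mu \bepsilon ( \bu_h ) - p_h \bI ) \|_{\cA,h} \leq \| \bsigma_h^S - 2 \mu \bepsilon ( \bu_h ) - p_h \bI \|_{\cA,h} = \eta_h^R$, so $\eta_h^S$ inherits the bound just obtained for $\eta_h^R$. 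Adding the three inequalities yields (\ref{eq:local_efficiency}).

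The final assembly is routine; the actual work lies upstream in the Lemma preceding the theorem, and that is where I expect the main obstacles. Concretely, one must establish the local $RT_1$ norm equivalence (\ref{eq:norm_equivalence_patch}) by a scaling argument on a finite-dimensional reference space, identify which edge and interior degrees of freedom of $\bsigma_h^R - 2 \mu \bepsilon ( \bu_h ) - p_h \bI$ vanish or collapse to a data-oscillation term --- which relies on the Fortin--Soulie conservation identities \cite[Thm. 1]{ForSou:83} --- and prove the edge-jump estimate $h_E \| \llbracket ( 2 \mu \bepsilon ( \bu_h ) + p_h \bI ) \cdot \bn \rrbracket_E \|_{L^2 (E)}^2 \lesssim \| \bsigma - 2 \mu \bepsilon ( \bu_h ) - p_h \bI \|_{L^2 ( \omega_E )}^2 + h_T^2 \| \div ( \bsigma - 2 \mu \bepsilon ( \bu_h ) - p_h \bI ) \|_{L^2 ( \omega_E )}^2$ in the spirit of \cite[Thm. 6]{AchBerCoq:03}. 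Throughout, the secondary but pervasive task is to verify that every constant introduced --- $C_A$, the constant of Proposition \ref{prop-average_symmetry_correction}, the shape-regularity constants in Proposition \ref{prop-distance_to_conforming} and in the bubble-space scaling --- is genuinely independent of $\lambda$, so that the hidden constant in $\lesssim$ stays uniform in the incompressible limit; this is precisely why the estimates are framed in the $\cA$-norm rather than the plain $L^2$ norm.
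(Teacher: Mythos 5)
Your assembly is exactly the paper's argument: $\eta_h^R$ via the chain (\ref{eq:local_efficiency_first}) plus the preceding Lemma, $\eta_h^C$ via Proposition \ref{prop-distance_to_conforming} and the discrete Korn inequality as in (\ref{eq:local_efficiency_second}), and $\eta_h^S \leq \eta_h^R$ via (\ref{eq:local_efficiency_third}), with the $\lambda$-independence of the constants tracked as the paper does. You have also correctly located the real technical content in the Lemma (the $RT_1$ scaling estimate, the Fortin--Soulie conservation identities, and the edge-jump bound), so nothing is missing.
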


{\colb
The last term in (\ref{eq:local_efficiency}) is of the same order as the approximation error is expected to decrease in the ideal
case for the finite element spaces studied in this paper. It is, however, not an oscillation term of higher order. Nevertheless,
\ref{theorem-efficiency} implies that the error estimator decreases proportionally to the approximation error.
}

\section{Computational Results}

\label{sec-computational}

\begin{figure}[!htb]
  \hspace*{.2cm}\includegraphics[scale=0.47]{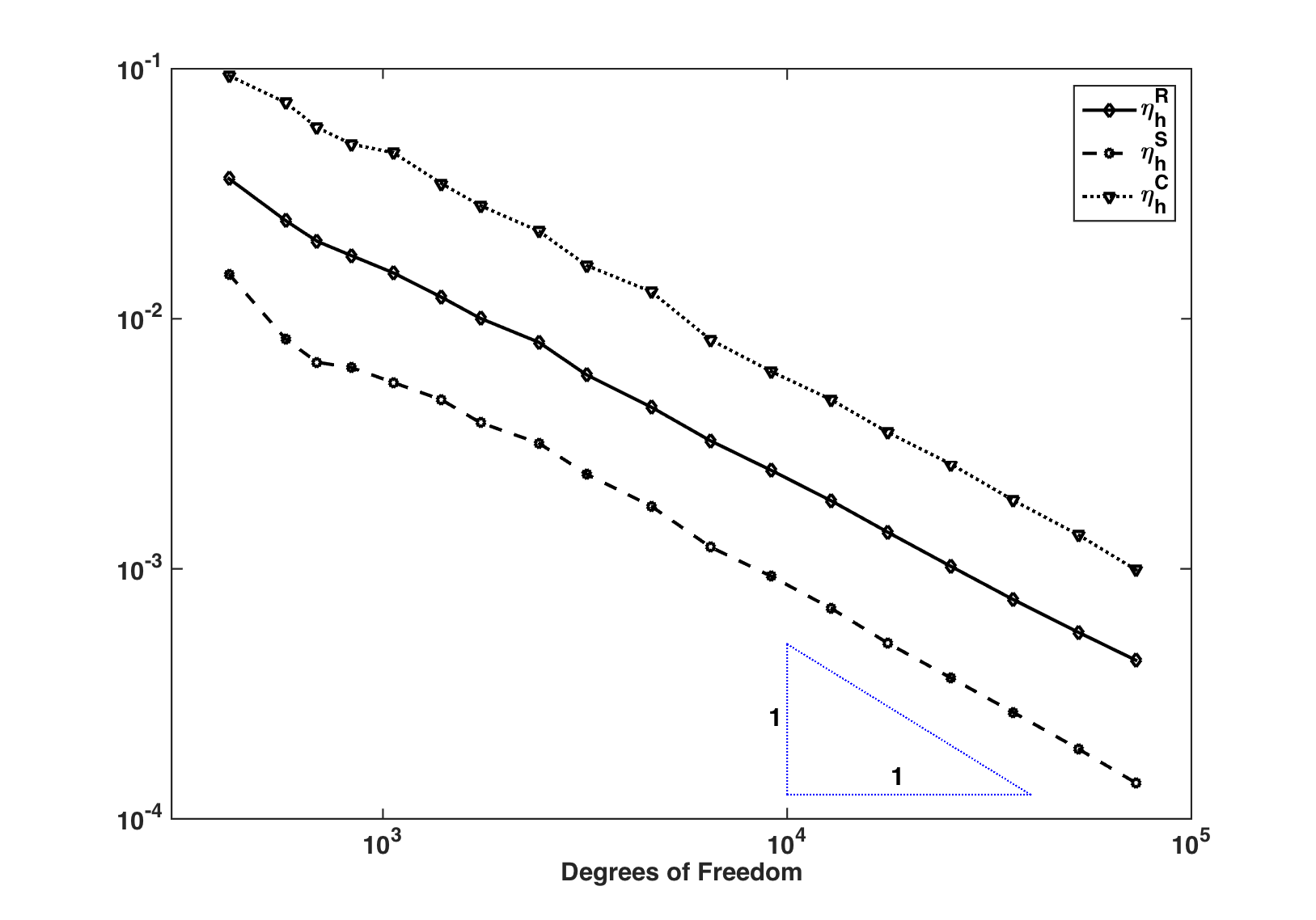}
  \caption{Adaptive finite element convergence: $\nu = 0.29$}
  \label{fig-fe_convergence_cook29}
\end{figure}

\begin{figure}[!htb]
  \hspace*{.2cm}\includegraphics[scale=0.47]{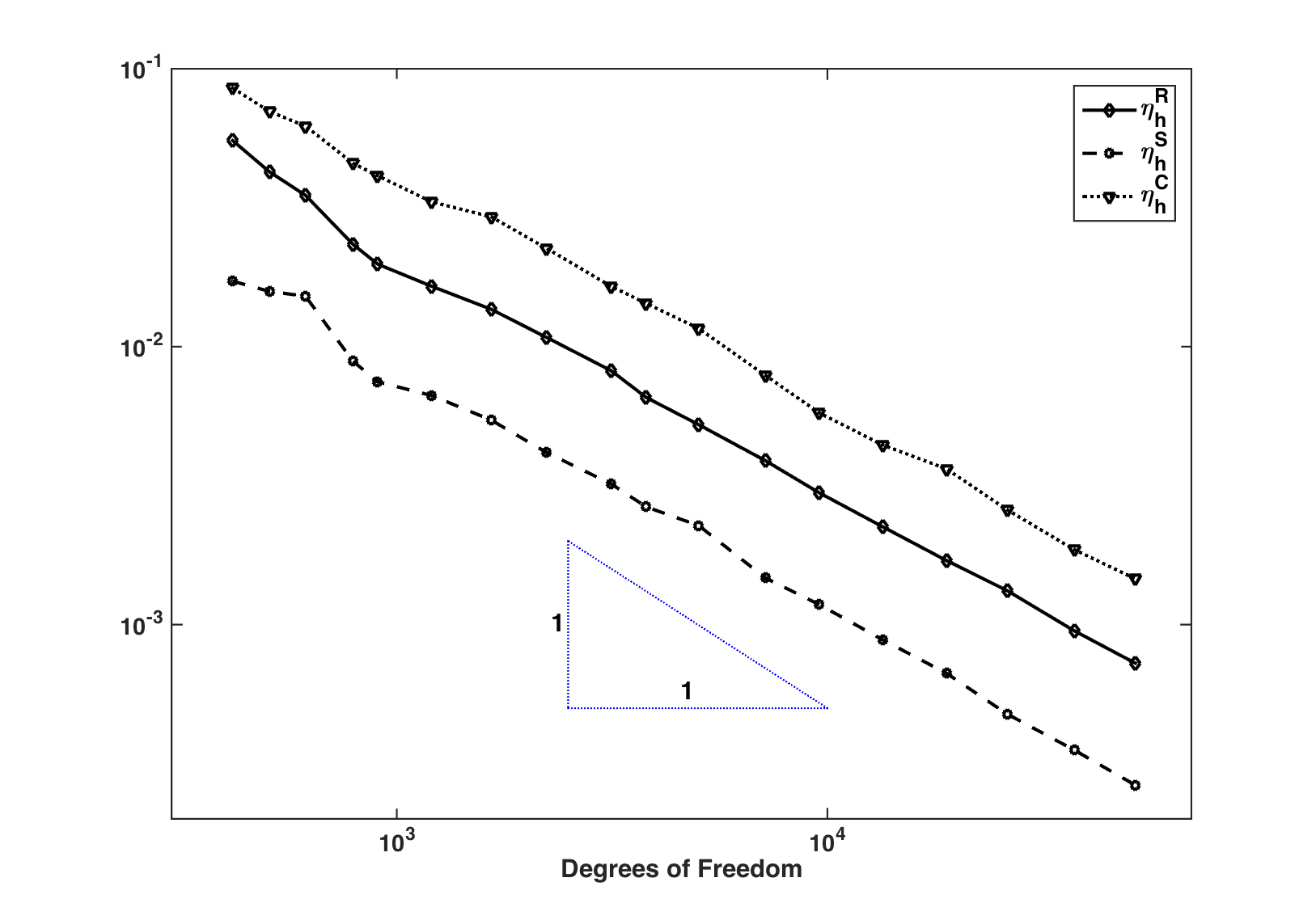}
  \caption{Adaptive finite element convergence: $\nu = 0.49$}
  \label{fig-fe_convergence_cook49}
\end{figure}

\begin{figure}[!htb]
  \hspace*{.2cm}\includegraphics[scale=0.4]{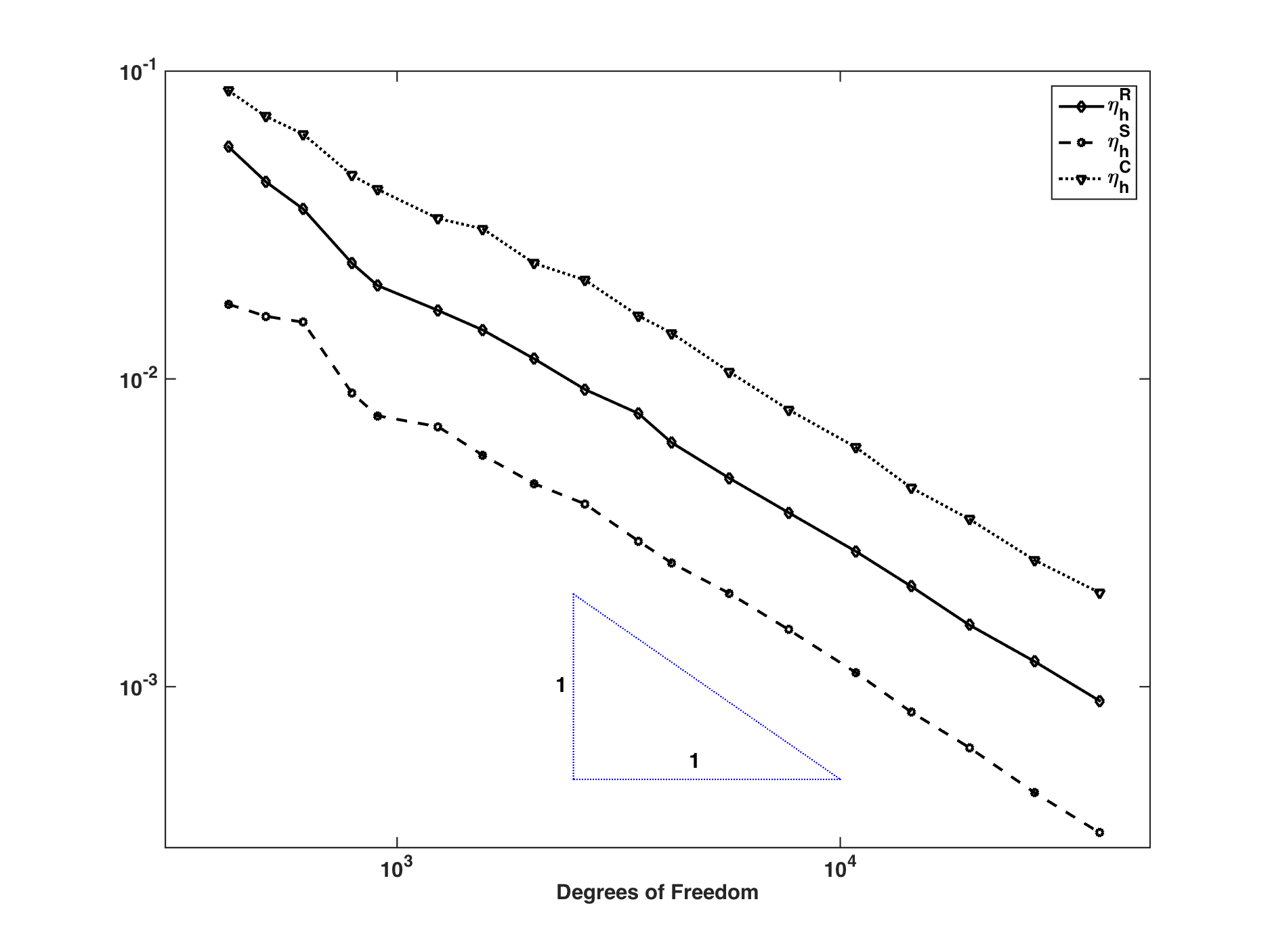}
  \caption{Adaptive finite element convergence: $\nu = 0.5$}
  \label{fig-fe_convergence_cook5}
\end{figure}

This section presents computational results with the a posteriori error estimator studied in the previous sections.
As an example, Cook's
membrane is considered which consists of the quadrilateral domain $\Omega \in \R^2$ with corners $(0,0)$, $(0.48,0.44)$,
$(0.48,0.6)$ and $(0,0.44)$, where $\Gamma_D$ coincides with the left boundary segment. The prescribed surface traction
forces on $\Gamma_N$ are $\bg = \bzero$ on the upper and lower boundary segments and $\bg = (0,1)$ on the right.
Starting from an initial triangulation with 44 elements, 17 adaptive refinement steps are performed based on the equilibration
strategy, where a subset $\widetilde{\cT}_h \subset \cT_h$ of elements is refined such that
\begin{equation}
  \left( \sum_{T \in \tilde{\cT}_h} \eta_T^2 \right)^{1/2} \geq \theta \left( \sum_{T \in \cT_h} \eta_T^2 \right)^{1/2}
  \label{eq:equilibration}
\end{equation}
holds with $\theta = 0.5$ (cf. \cite[Sect. 2.1]{Ver:13}). Figures \ref{fig-fe_convergence_cook29},
\ref{fig-fe_convergence_cook49} and \ref{fig-fe_convergence_cook5} show the convergence behavior in terms of the error
estimator for Poisson ratios $\nu = 0.29$ (compressible case), $\nu = 0.49$ (nearly incompressible case) and $\nu = 0.5$
(incompressible case). Since the Poisson ratio is related to the Lam\'e parameters by $2 \mu \nu = \lambda (1 - 2 \nu)$ and
since $\mu$ is set to 1 in our computations, this leads to the values $\lambda = 1.381$, $\lambda = 49$ and $\lambda = \infty$
in the three examples.
The solid line (always in the middle) represents the estimator term $\eta_h^R$, the dashed line below stands for $\eta_h^S$
measuring the skew-symmetric part and the dotted line shows the values for $\eta_h^C$, the distance to the conforming space.
In all cases, the optimal convergence behavior $\eta_h^\Box \sim N_h^{-1}$, if $N_h$ denotes the number of unknowns, is
observed. {\colb For the investigation of the effectivity of error estimators of the type presented in this paper we refer to
\cite{AinAllBarRan:12,Kim:12a} where the case of the Stokes equations with $\Gamma_D = \partial \Omega$ is treated. The fact
that the estimator term $\eta_h^S$ measuring the symmetry is dominated by the other two contributions
$\eta_h^R$ and $\eta_h^C$ suggests that the effectivity indices are comparable to those reported in these references.}

\begin{figure}[!htb]
  \hspace*{.2cm}\includegraphics[scale=0.22]{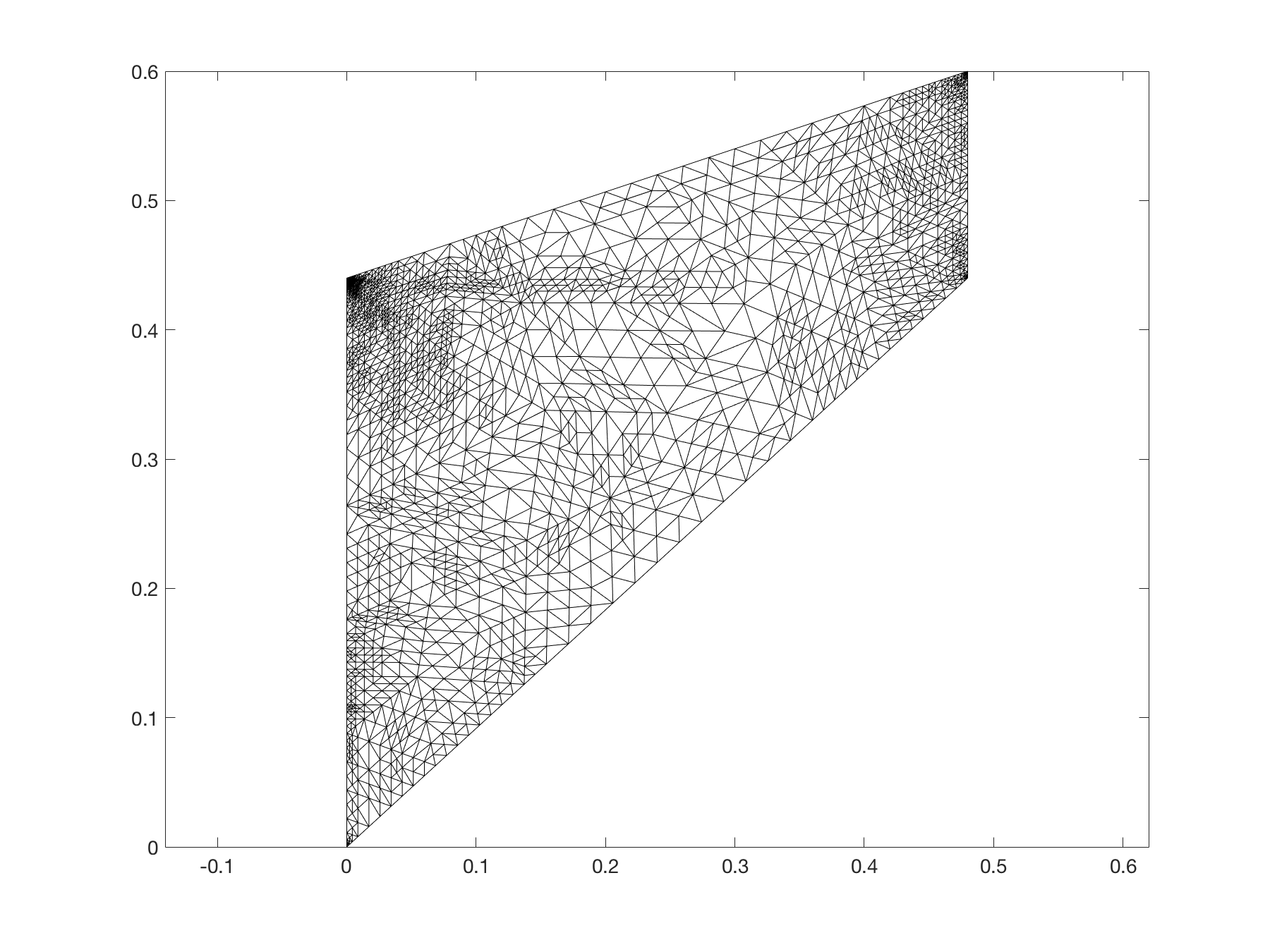}
  \caption{Triangulation after 17 adaptive refinements: $\nu = 0.5$}
  \label{fig-fe_mesh_cook5}
\end{figure}

For the incompressible case, the final triangulation after 17 adaptive refinement steps is shown in Figure
\ref{fig-fe_mesh_cook5}.
As expected, most of the refinement is happening in the vicinity of the strongest singularity at the upper left corner.

{\bf Final Remarks.} We close our contribution with remarks on the generalization to three-dimensional elasticity computations.
As already pointed out in the introduction the properties of the quadratic nonconforming element space were studied in
\cite{For:85} and its combination with piecewise linears again constitutes an inf-sup stable pair for incompressible linear elasticity.
The stress reconstruction algorithm of \cite{AinAllBarRan:12} and \cite{Kim:12a} (see the end of Section 3) can also be
generalized in a straightforward way to the three-dimensional case due to the fact that the corresponding conservation properties
hold in a similar way on elements and faces as proven in \cite{For:85}. For the improved and guaranteed estimator, the
construction of stresses with element-wise symmetry on average becomes somewhat more complicated in the three-dimensional
situation. This is due to the fact that the correction needs to be computed in the space of curls of N\'ed\'elec elements leading to
a more involved local saddle point structure. For the computation of a divergence-constrained conforming approximation we see,
however, no principal complications in three dimensions. Exploring the details of the associated analysis is the topic of ongoing
work. The presentation of the results including three-dimensional computations are planned for a future paper.

{\bf Acknowledgement.} We thank Martin Vohral{\'{\i}}k for elucidating discussions and for pointing out reference
\cite{AchBerCoq:03} to us. {\colb We are also grateful to two anonymous referees for the careful reading of our manuscript and
for helpful suggestions. In particular, both of them found an error in an earlier version of the correction procedure in Section 5.}

\newpage

\bibliography{../../biblio/articles,../../biblio/books}
\bibliographystyle{siamplain}

\end{document}